\documentclass[12pt]{amsart}
\usepackage{amsmath}
\usepackage[numbers,sort&compress]{natbib}

\newtheorem{theorem}{Theorem}[section]
\newtheorem{lemma}[theorem]{Lemma}

 \theoremstyle{definition}

\theoremstyle{remark}
\newtheorem{remark}[theorem]{Remark}
\allowdisplaybreaks[4]

\numberwithin{equation}{section}

\makeatletter
\@namedef{subjclassname@2020}{\textup{2020} Mathematics Subject Classification}
\makeatother

\usepackage[numbers,sort&compress]{natbib}
\begin{document}

\title[sum Hessian quotient equations]
{A class of  sum Hessian quotient equations arising in conformal geometry}

\author{Ni Xiang}
\address{Faculty of Mathematics and Statistics, Hubei Key Laboratory of Applied Mathematics, Hubei University,  Wuhan 430062, P.R. China}
\email{nixiang@hubu.edu.cn}

\author{Yuni Xiong$^\ast$}
\address{Faculty of Mathematics and Statistics, Hubei Key Laboratory of Applied Mathematics, Hubei University,  Wuhan 430062, P.R. China}
\email{202121104011767@stu.hubu.edu.cn}

\keywords{Sum Hessian quotient equations; The conformal geometry.}

\subjclass[2020]{35J60; 58J05.}
\thanks{This research was supported by funds from the National Natural Science Foundation of China No.12571213, No. 12426532.}
\thanks{$\ast$ Corresponding author}

\begin{abstract}
In this paper, we investigate the existence result of $(n-1)$-admissible solutions to a class of sum Hessian quotient equations with the modified Schouten tensor on Riemannian manifolds.
By establishing uniform a priori estimates, we prove the solvability of the problem via degree theory.
\end{abstract}

\maketitle
%%%%%%%%%%%%%%%%%%%%%%%%%%%%%%%%%%%
\section{Introduction}
Let $\left(\overline{M}^n,g\right)$ be a smooth closed Riemannian manifold of dimension $n\geq 3$ with smooth boundary $\partial M$, and $M=\overline{M}\setminus \partial M$ be the interior of $\overline{M}$. In the study of conformal geometry, the modified Schouten tensor is defined by
$$A^{\tau}_{\widetilde{g}}={\frac{1}{{n-2}}} \left({Ric}_{{\widetilde{g}}}-\frac{\tau R_{\widetilde{g}}}{2(n-1)}\widetilde{g} \right),$$
where $\widetilde{g}\in [g]$ belongs to the conformal class of $g$,  ${Ric}_{\widetilde{g}}$ and ${R}_{\widetilde{g}}$ are the Ricci and scalar curvatures of $\widetilde{g}$ respectively. This tensor was independently introduced by Gursky-Viaclovsky \cite{Ger03} and Li-Li \cite{LL03} to explore the conformal deformation of curvatures.
In this paper, we investigate the following sum Hessian quotient equation involving the modified Schouten tensor $A_{\widetilde{g}}^\tau$
\begin{align}\label{eq121501}
G(A_{\widetilde{g}}^\tau)=\frac{S_n(A_{\widetilde{g}}^\tau)}{S_{n-1}(A_{\widetilde{g}}^\tau)}
=\frac{\sigma_n(A_{\widetilde{g}}^\tau)+\alpha(x)\sigma_{n-1}(A_{\widetilde{g}}^\tau)}{\sigma_{n-1}(A_{\widetilde{g}}^\tau)+\alpha(x)\sigma_{n-2}(A_{\widetilde{g}}^\tau)}=f(x),
\end{align}
where $f(x)>0$ and $\alpha(x)\geq 0$ are smooth functions on $M$.
The $k$-th elementary symmetric function is defined by
\begin{equation*}
\sigma_k(\lambda)= \sum _{1 \le i_1 < i_2 <\cdots<i_k\leq
n}\lambda_{i_1}\lambda_{i_2}\cdots\lambda_{i_k},
\end{equation*}
and $\sigma_k(A^{\tau}_{\widetilde{g}})$ means that $k$-elementary symmetric polynomial $\sigma_k$ is applied to the eigenvalues of $g^{-1}\cdot A^{\tau}_{\widetilde{g}}$.

This operator naturally generalizes the classic Hessian operator $\sigma_k$ through a linear combination of Hessian operators. The study of such non-homogeneous equations is deeply rooted in both convex geometry and theoretical physics.
A prominent  example is the Fu-Yau equation
$$\sigma_1(i\partial\overline{\partial}(e^u+\alpha'e^{-u}))+\alpha'\sigma_2(i\partial\overline{\partial}u)=\phi$$
on $n$ dimensional compact K\"{a}hler manifolds. This equation originates from the study of the Hull-Strominger system in  superstring theory and has been extensively investigated in recent years, with  fundamental contributions by Fu-Yau \cite{Fy08,Fy07} and Phong-Picard-Zhang \cite{PPZ19,PPZ17,PPZ}.
Another  significant instance is the special Lagrangian equation introduced by Harvey-Lawson \cite{HL82}, which can be expressed as an alternative combinations of elementary symmetric functions
\begin{equation*}
\sin\Theta\sum_{k=0}^{n/2}(-1)^k\sigma_{2k}(D^2u)+\cos\Theta\sum_{k=0}^{(n-1)/2}(-1)^k\sigma_{2k+1}(D^2u)=0.
\end{equation*}
This equation plays a fundamental role in calibrated geometry and the theory of minimal submanifolds.
Furthermore, equations of the form
\begin{align}
\sigma_1(D^2u)+b\sigma_n(D^2u)=C,
\end{align}
where $b\geq0$ and $C>0$ are constants, arise naturally from the study of the $J$-equation on toric varieties by Collins-Sz\'{e}kelyhidi \cite{CS17}.

The well-known $\sigma_k$-Yamabe problem is of finding a conformal metric $\widetilde{g}\in [g]$ satisfies
$$\sigma_k(A_{\widetilde{g}})=constant.$$
This problem and its variants have been extensively studied with significant contributions in \cite{Ya60,Au76,Sc84,Via00-,Via02,STW07,CGY02-1, CGY02-2,GW06,TW10,BV04,GV07,GW3,SZ06,LS}. Gursky-Viaclovsky \cite{Ger03} investigated a conformal metric $\widetilde{g}\in [g]$ satisfying
\begin{equation}\label{eq051202}
\sigma_k\left(A^{\tau}_{\widetilde{g}}\right)=f(x)
\end{equation}
for any smooth function $f(x)<0$ with $\lambda(-A^{\tau}_{\widetilde{g}})\in \Gamma_k$, $\tau<1$. A parabolic proof was later given by Li-Sheng \cite{LS05}. Recent work has gradually focused on Krylov type equations and sum Hessian equations, both of which involve linear combinations of elementary symmetric functions. The Krylov type equation with the modified Schouten tensor, is given by
\begin{equation}\label{eq051203}
\sigma_k\left(-A_{\tilde{g}}^\tau\right)+\alpha(x)\sigma_{k-1}\left(-A_{\tilde{g}}^\tau\right)=\sum_{l=0}^{k-2}\alpha^l(x)\sigma_l\left(-A_{\tilde{g}}^\tau\right).
\end{equation}
Under the conformal change of metric $\widetilde{g}=e^{2u}g$, the modified Schouten tensor transforms as
\begin{equation}\label{eq051401}
A_{\widetilde{g}}^{\tau}=\frac{\tau-1}{n-2}(\Delta u)g-\nabla^2u+du\otimes du+\frac{\tau-2}{2}|\nabla u|^2g+A_{g}^\tau.
\end{equation}
Chen-He \cite{CH24} proved the existence of a conformal metric $\widetilde{g}\in [g]$ satisfying equation \eqref{eq051203},  provided that $\lambda(-A_{\tilde{g}}^\tau)\in \Gamma_k$, $\tau<1$ and $\alpha^l(x)>0$ for $0\leq l\leq k-2$.
Furthermore, they specifically investigated the case where $\tau=0$, associated with the prescription of negative Ricci curvature.
%They established the existence for the following equation with $\lambda(-Ric_{\widetilde{g}})\in \Gamma_k$
%\begin{equation*}
%\sigma_k(-Ric_{\widetilde{g}})=\sum_{l=0}^{k-1}\alpha^l(x)\sigma_l\left(-Ric_{\widetilde{g}}\right),
%\end{equation*}
%where the coefficients $\alpha_0, \cdots, \alpha_{k-2}$ are nonnegative real numbers and $\alpha_{k-1}$ is a real number.
%In a related development, Chen-Guo-He \cite{CGH22} established the existence result to \eqref{eq051203} with $\tau=0$, $\alpha^0>0$, $\alpha^l\equiv0$ for $l\neq0$.
%Motivated by these foundational works, we seek a conformal metric $\widetilde{g}\in[g]$ satisfies the equation \eqref{eq121501} with the modified Schouten tensor in conformal geometry.
%In light of the conformal transformation $\widetilde{g}=e^{2u}g$, we have

%The case $\tau=1$ which recovers the standard Schouten tensor, the existence to equations
%\begin{equation*}
%\sigma_k(A_{\widetilde{g}})+\alpha(x)\sigma_{k-1}(A_{\widetilde{g}})=\sum_{l=0}^{k-2}\alpha^l(x)\sigma_l(A_{\widetilde{g}})
%\end{equation*}
%was obtained by Chen-Guo-He \cite{CGH22} with $\alpha^l(x)\geq 0$.
%Then we naturally want to find a metric $\widetilde{g}\in[g]$ under $\widetilde{g}=e^{-2u}g$ satisfies the equation with Schouten tensor in conformal geometry
%\begin{align}\label{eq020502}
%G(A_{\widetilde{g}}^1)=\frac{S_n}{S_{n-1}}\left(\nabla^2u+du\otimes du-\frac{1}{2}|\nabla u|^2g+A_{g}^1\right)=f(x),
%\end{align}
In this paper, we study  more general equations. Let's consider
\begin{equation}\label{eq04221}
  U(u)=\lambda(\Delta u)g+\nabla^{2}u+a(x)du\otimes du+b(x)|\nabla u|^{2}g+B, \quad \lambda>0,
\end{equation}
and
\begin{equation}\label{eq052001}
\widetilde{U}(u)=\lambda(\Delta u)g-\nabla^{2}u+a(x)du\otimes du+b(x)|\nabla u|^{2}g+B, \quad \lambda>1,
\end{equation}
where $B$ is a symmetric 2-tensor, $a(x)$ and  $b(x)$ are smooth functions on $M$.
We only introduce the case of $U(u)$, since the proof for the other case $\widetilde{U}(u)$ is similar.
Under a conformal change of metric $\widetilde{g}=e^{2u}g$, the problem reduces to finding a solution $u$ for the sum Hessian quotient equation
\begin{align}\label{eq050601}
F(U)=\frac{\widetilde{S}_n(U)}{\widetilde{S}_{n-1}(U)}=\frac{\sigma_n(U)+\alpha(x) e^{2u(x)} \sigma_{n-1}(U)}{\sigma_{n-1}(U)+\alpha (x) e^{2u(x)}\sigma_{n-2}(U)}=e^{2u(x)}f(x),
\end{align}
where $\alpha(x)\geq 0$ and $f(x)>0$ are smooth functions on $M$. We say $u$ is a $k$-admissible solution of \eqref{eq050601} if $\lambda (g^{-1}\cdot U)\in \Gamma_k$, where
$$\Gamma_k=\{\lambda\in\mathbb{R}^n|\sigma_m(\lambda)>0,\,m=1,\cdots,k\}.$$
Then we derive the following existence theorem.
\begin{theorem}\label{thm051104}
Let $(M,g)$ be a smooth closed Riemannian manifold of dimension $n\geq 3$. Assume that $\alpha(x)\geq 0$ and $f(x)>0$ are smooth functions on $M$, $\lambda>0$, $B \in \Gamma_n$. Then there exists a conformal metric $\widetilde{g}\in [g]$ satisfies equation \eqref{eq050601} with $\lambda(g^{-1}\cdot U)\in \Gamma_{n-1}$.
\end{theorem}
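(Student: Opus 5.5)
We will prove Theorem \ref{thm051104} by the continuity method combined with Leray--Schauder degree theory, after establishing uniform a priori $C^0$, $C^1$ and $C^2$ estimates for $(n-1)$-admissible solutions. We first put the equation in a form adapted to the estimates. Write $\widetilde{S}_k(U)=\sigma_k(U)+\alpha e^{2u}\sigma_{k-1}(U)$. Then $F=\widetilde{S}_n/\widetilde{S}_{n-1}$ is concave and elliptic on $\Gamma_{n-1}$. This follows from the known structure of sum Hessian quotients $\frac{\sigma_k+\alpha\sigma_{k-1}}{\sigma_{k-1}+\alpha\sigma_{k-2}}$ with $\alpha\ge0$, which are concave on $\Gamma_{k-1}$ in the Guan--Zhang / Chen--Guo--He sense. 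We also note that $\sigma_n+\alpha e^{2u}\sigma_{n-1}>0$ holds automatically once $F=e^{2u}f>0$. Next we embed the equation in the family
\[
F(U(u))=e^{2u}\big(tf+(1-t)f_0\big),\qquad t\in[0,1],
\]
with $a,b$ replaced by $ta,tb$. Here $f_0$ is chosen so that $u\equiv0$ solves the $t=0$ equation, i.e. $f_0=F(B)$, which is positive since $B\in\Gamma_n$. At $t=0$ the linearized operator at $u=0$ is $L v=F^{ij}(\lambda\Delta v\,g_{ij}+v_{ij})+(\partial_u F-2F)v$. After checking that $\partial_uF-2F<0$ there, this operator is invertible, so the solution is unique and nondegenerate and the degree is nonzero.

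\textbf{$C^0$ estimate.} At a maximum point $x_0$ of $u$ we have $\nabla u=0$ and $\nabla^2u\le0$, hence $\Delta u\le0$, so $U\le B$ there in the matrix sense. Note that the equation carries $e^{2u}$ on both sides through $\alpha e^{2u}$. Rewrite it as
\[
\frac{\sigma_n(U)e^{-2u}+\alpha\sigma_{n-1}(U)}{\sigma_{n-1}(U)+\alpha e^{2u}\sigma_{n-2}(U)}=f.
\]
Combining this with monotonicity of $\sigma_n/\sigma_{n-1}$ and $\sigma_{n-1}/\sigma_{n-2}$ on $\Gamma_{n-1}$ and the bound $U\le B$, we obtain $e^{2u}f\le F(U)\le C(B,\alpha)$, which gives $\sup u\le C$. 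At a minimum point, $U\ge B\in\Gamma_n$, so $F(U)\ge c_0>0$ uniformly. The left side, however, can grow like $\lambda_{\max}$, so we need the opposite monotonicity. We will use $F(U)\ge \min\big(\sigma_n/\sigma_{n-1},\,\sigma_{n-1}/\sigma_{n-2}\big)(U)\ge c(B)$ together with $F=e^{2u}f$ to get $\inf u\ge -C$.

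\textbf{$C^1$ and $C^2$ estimates.} For the gradient we take a test function $\log|\nabla u|^2+\varphi(u)$ and use the terms $a\,du\otimes du+b|\nabla u|^2g$ and $\lambda\Delta u\,g$. Since $\lambda>0$, we have $\sum F^{ii}$ comparable to $\lambda\,\mathrm{tr}F^{ij}$ times the identity, and this yields uniform ellipticity-type control. Alternatively, the gradient bound can be derived from the $C^2$ bound by interpolation, since on a closed manifold $|\nabla u|^2\le C(1+\sup|\nabla^2u|)$. The second-derivative estimate is the main obstacle. We will consider $W=U+\text{(lower order)}$ and the test function $\log\lambda_{\max}(U)+\varphi(|\nabla u|^2)+\psi(u)$, differentiating the equation twice. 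Concavity of $F$ handles the third-order terms. The $\lambda\Delta u\,g$ part makes $\sum_iF^{ii}U_{11ii}$ dominate, because each $F^{ii}$ picks up $\lambda\sum_jF^{jj}$. The difficult terms are the $e^{2u}\alpha$ dependence of $F$ on $u$ and the quadratic gradient terms. We will absorb them using the lower bound $\sum F^{ii}\ge c>0$ on $\Gamma_{n-1}$, which holds for these quotients, and choose $\varphi$ convex to produce a good term $\sum F^{ii}u_{ii}^2$. We expect to obtain $\Delta u\le C(1+|\nabla u|^2)$ and then a full $C^2$ bound. Evans--Krylov then applies by concavity, and the degree argument gives the existence of $u$ and hence of $\widetilde g=e^{2u}g$.
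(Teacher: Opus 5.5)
Your overall plan is the paper's: uniform $C^0$, $C^1$, $C^2$ bounds, then Evans--Krylov, then Li's degree along a homotopy ending at an equation solved by $u\equiv0$. Your homotopy scales $a,b$ by $t$ and keeps $B$, while the paper's deforms $B$ to $g$; both are fine. Your $C^0$ argument and the sign $\partial_uF-2F<0$ at $u=0$ are also correct.

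\textbf{The gap is in the degree step.} Invertibility of the linearized operator at $u=0$ only shows that $u=0$ is an isolated nondegenerate zero of $\widetilde{G}(\cdot\,;0)$. It says nothing about other zeros in $\mathcal{O}_R$. The degree is the sum of the indices of \emph{all} zeros, so ``invertible, hence unique, hence $\deg=\pm1$'' does not follow. You need a separate global uniqueness argument, which the paper supplies by the maximum principle (Theorem \ref{thm051102}). In your family, write $F_\mu(U)=\frac{\sigma_n(U)+\mu\sigma_{n-1}(U)}{\sigma_{n-1}(U)+\mu\sigma_{n-2}(U)}$. The $t=0$ equation is then $F_{\alpha e^{2u}}(U)=e^{2u}F_\alpha(B)$ with $U=\lambda\Delta u\,g+\nabla^2u+B$. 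At a minimum point $x_0$ of $u$ we have $U\ge B$, so with $s=e^{2u(x_0)}$, ellipticity gives $sF_\alpha(B)=F_{\alpha s}(U)\ge F_{\alpha s}(B)$. The map $s\mapsto s^{-1}F_{\alpha s}(B)=\frac{\sigma_n(B)/s+\alpha\sigma_{n-1}(B)}{\sigma_{n-1}(B)+s\alpha\sigma_{n-2}(B)}$ is strictly decreasing because $\sigma_n(B)>0$, so this forces $s\ge 1$. The same argument at a maximum point gives $s\le 1$, hence $u\equiv 0$.

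\textbf{Two points in the estimates also need repair.}

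First, the interpolation route to the gradient bound is circular. The inequality $\sup|\nabla u|^2\le C(1+\sup|\nabla^2u|)$ holds only with a constant that is not small. Combined with $\Delta u\le C(1+|\nabla u|^2)$ it does not close without a blow-up/Liouville argument, so you must do the maximum-principle gradient estimate. The mechanism there is
$\widetilde F^{ij}:=\partial F/\partial u_{ij}=\lambda(\sum_rF^{rr})\delta_{ij}+F^{ij}\ge\lambda(\sum_rF^{rr})\delta_{ij}$, together with $\frac1n\le\sum_iF^{ii}\le 2$ (Lemma \ref{le051201}). After using the critical-point equation, the $a,b$ terms are of order $|\varphi'|\,|\nabla u|^2\sum_iF^{ii}$. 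That is the same order as the good term $(\varphi''-\varphi'^2)\lambda|\nabla u|^2\sum_iF^{ii}$. So you must choose $\varphi$ with $\varphi''-\varphi'^2\ge K|\varphi'|$, $|\varphi'|$ bounded below, and $K\lambda>\sup(|a|+|b|)$; the paper's $\phi=\varphi^{-N}$ with $N$ large does this.

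The same uniform ellipticity makes your $\log\lambda_{\max}$ test function unnecessary. The paper uses $\rho(\Delta u+N_1|\nabla u|^2)$. There $2N_1\widetilde F^{ij}u_{ki}u_{kj}\ge 2N_1\lambda(\sum_rF^{rr})|\nabla^2u|^2$ absorbs all quadratic terms, and $|u_{ij}|\le C(1+\Delta u)$ because $\Gamma_{n-1}\subset\Gamma_2$.

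Second, the $\alpha e^{2u}$-dependence produces a mixed third-order term $2\,\partial_\mu F^{ij}\,U_{ijk}\,(\alpha e^{2u})_k$, which is not controlled by $\sum_iF^{ii}\ge c$. The clean fix is to note that $\sigma_k(U)+\mu\sigma_{k-1}(U)=\sigma_k(\lambda(U),\mu)$ in $\mathbb{R}^{n+1}$. Hence $F$ is $\sigma_n/\sigma_{n-1}$ of $(\lambda(U),\mu)$ and is jointly concave in $(U,\mu)$ for $\lambda(U)\in\Gamma_{n-1}$, $\mu\ge 0$. This disposes of the mixed term together with $F^{ij,rs}U_{ijk}U_{rsk}$. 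The paper's own computation passes over this term.
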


\begin{remark}
The theorem \ref{thm051104}  covers
the special case $\tau=0$. This case corresponds precisely to the problem of prescribing negative Ricci curvature in conformal geometry,
which is previously studied in  sum Hessian quotient equations
\begin{align*}\label{eq020501}
G\left(-A_{\widetilde{g}}^0\right)=\frac{S_n}{S_{n-1}}\left(\frac{-Ric_{\widetilde{g}}}{n-2}\right)=f(x),
\end{align*}
associated with
\begin{equation*}\label{eq051402}
\frac{-Ric_{\widetilde{g}}}{n-2}=\frac{1}{n-2}(\Delta u)g+\nabla^{2}u-du\otimes du+|\nabla u|^{2}g+\frac{-Ric_{g}}{n-2}.
\end{equation*}
%can be recovered by setting the parameters $\lambda=\frac{1}{n-2}$, $a=-1$, $b=1$, $B=\frac{-Ric_{g}}{n-2}$.
\end{remark}

\begin{remark}
In addition to the case treated in Theorem \ref{thm051104},
analogous existence results hold for sum Hessian quotient equations of the form
\begin{align*}
F(\widetilde{U})=\frac{\widetilde{S}_n(\widetilde{U})}{\widetilde{S}_{n-1}(\widetilde{U})}=e^{2u}f(x),
\end{align*}
where
\begin{equation*}
\widetilde{U}(u)=\lambda(\Delta u)g-\nabla^{2}u+a(x)du\otimes du+b(x)|\nabla u|^{2}g+B
\end{equation*}
under conditions $\lambda>1$, $B \in \Gamma_n$, $\lambda(g^{-1}\cdot \widetilde{U})\in \Gamma_{n-1}$. The proof is analogous to that of Theorem \ref{thm051104} and is therefore omitted.
\end{remark}
The rest of the paper is organized as follows. Section 2 is devoted to some preliminary results. In Section 3, we first establish  the uniform $C^0$, $C^1$ and $C^2$ estimates for solutions. Then the proof of Theorem \ref{thm051104} is presented  by applying standard degree theory.

%\begin{theorem}\label{thm122401}
%Let $(M,g_0)$ be a smooth closed Riemannian manifold of dimension $n\geq 3$, $\lambda(-Ric_{g_0})\in \Gamma_n$ be the eigenvalues of $g_0^{-1}\cdot (-Ric_{g_0})$.
%Assume $\alpha$ and $f$ be two positive functions on $M$. Then there is a conformal metric $g\in [g_0]$ satisfies equation \eqref{eq020501}.
%\end{theorem}

%\begin{remark}
%The theorem can be generalized to equations of the more general form \eqref{eq121501}.
%\end{remark}

%%%%%%%%%%%%%%%%%%%%%%%%%%%%%%%%%%%%%%%%%%%%%%%%%%%%%%%%%%%%%%%%%%%%%%%%%%%%%%%%%%%%%%%%%%%%%%%%%%%%%%%%%%%%%%%%%%%%%%%%%%%%%%%%%%%%%%%%%%%%%%%%%%%%%%%%%%%%%%%%%%%%
\section{Preliminaries}
In this section, we present some basic formulas and lemmas for the subsequent analysis, which can be found in \cite{LRW19,R,RW25a,RW25b}. We first state several basic properties of sum Hessian operators $\widetilde{S}_k(\lambda(U))=\sigma_k(\lambda(U))+\alpha(x) e^{2u(x)} \sigma_{k-1}(\lambda(U))$.
\begin{lemma}\label{lem2.5}
Suppose that $\lambda=(\lambda_1,\cdots,\lambda_n)\in \Gamma_{k-1}$, $1\leq
l<k\leq n$, then
\begin{equation}\label{e2.4}
\frac{\sigma_{k-1}(\lambda)\sigma_l(\lambda)}{C_n^{k-1}C_n^l}\geq\frac{\sigma_k(\lambda)\sigma_{l-1}(\lambda)}{C_n^kC_n^{l-1}},
\end{equation}
\end{lemma}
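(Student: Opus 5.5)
The inequality to prove is
\[
\frac{\sigma_{k-1}\sigma_l}{C_n^{k-1}C_n^l}\ge \frac{\sigma_k\sigma_{l-1}}{C_n^kC_n^{l-1}}
\]
for $\lambda\in\Gamma_{k-1}$ and $1\le l<k\le n$. My plan is to pass to the normalized functions $E_m(\lambda)=\sigma_m(\lambda)/C_n^m$, with $E_0=1$. In this notation the claim reads $E_{k-1}E_l\ge E_kE_{l-1}$. The basic tool is the Newton--Maclaurin inequality $E_{m}^2\ge E_{m-1}E_{m+1}$, valid for every $\lambda\in\mathbb{R}^n$ and every $1\le m\le n-1$.

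First I treat the degenerate case. If $\sigma_k(\lambda)\le 0$, then the right-hand side is at most $0$, because $\sigma_{l-1}>0$: either $l-1=0$, or $1\le l-1\le k-2$ and $\lambda\in\Gamma_{k-1}$. The left-hand side is positive since $l\le k-1$. So the inequality holds, and from now on I assume $\sigma_k>0$, i.e.\ $\lambda\in\Gamma_k$. Then $E_0,E_1,\dots,E_k$ are all positive.

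Next I chain the Newton inequalities. For each $1\le m\le k-1$, the inequality $E_m^2\ge E_{m-1}E_{m+1}$ with all quantities positive gives
\[
\frac{E_m}{E_{m-1}}\ge \frac{E_{m+1}}{E_m}.
\]
So the ratios $r_m:=E_m/E_{m-1}$ are nonincreasing in $m$ for $1\le m\le k$. Since $l\le k-1<k$, we get $r_l\ge r_k$, that is, $E_l/E_{l-1}\ge E_k/E_{k-1}$. Cross-multiplying by the positive quantity $E_{l-1}E_{k-1}$ gives $E_lE_{k-1}\ge E_kE_{l-1}$, which is the claim.

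The only subtle point is positivity: the chaining step needs every intermediate $E_m$ with $m\le k$ to be positive, and this is exactly why the case $\sigma_k\le 0$ has to be split off first. Note also that if $l=k-1$, the inequality is just Newton's inequality itself.
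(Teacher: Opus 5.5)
Your proposal is correct and follows the same route as the paper. Both arguments dispose of the case $\sigma_k\le 0$ by noting that the right-hand side is nonpositive while the left-hand side is positive, and derive the case $\sigma_k>0$ from Newton--Maclaurin; you simply spell out the latter by chaining $E_m^2\ge E_{m-1}E_{m+1}$ into monotonicity of the ratios $E_m/E_{m-1}$ for $m\le k$.
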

\begin{proof}
If $\sigma_k(\lambda)>0$, inequality \eqref{e2.4} follows directly from the Newton-MacLaurin inequality. If $\sigma_k(\lambda) \leq 0$, the result obviously holds.
\end{proof}

\begin{lemma}\label{le050801}
Let $\lambda=(\lambda_1,\cdots,\lambda_n)\in \mathbb{R}^n$ and $k=1,2,\cdots,n$, then
\par
(1) $\widetilde{S}_k^{pp}(\lambda):=\frac{\partial
\widetilde{S}_k(\lambda)}{\partial\lambda_p}=\sigma_{k-1}(\lambda|p)+\alpha e^{2u}\sigma_{k-2}(\lambda|p)=\widetilde{S}_{k-1}(\lambda|p)$;
\par
(2) $\widetilde{S}_k^{pp,qq}(\lambda):=\frac{\partial^2\widetilde{S}_k(\lambda)}{\partial\lambda_p\partial\lambda_q}=\widetilde{S}_{k-2}(\lambda|pq)$;
\par
(3) $\widetilde{S}_k(\lambda)=\lambda_i \widetilde{S}_{k-1}(\lambda|i)+\widetilde{S}_k(\lambda|i)$;
\par
(4) $\sum_i \widetilde{S}_k(\lambda|i)=(n-k)\widetilde{S}_k(\lambda)+\alpha e^{2u}\sigma_{k-1}(\lambda)$;
\par
(5) $\sum_i\lambda_i\widetilde{S}_{k-1}(\lambda|i)=k\widetilde{S}_k(\lambda)-\alpha e^{2u}\sigma_{k-1}(\lambda)$.
\end{lemma}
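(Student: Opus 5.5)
The statement to prove is Lemma \ref{le050801}, a list of algebraic identities for $\widetilde{S}_k(\lambda)=\sigma_k(\lambda)+\alpha e^{2u}\sigma_{k-1}(\lambda)$. Here $\alpha e^{2u}$ is treated as a constant with respect to $\lambda$. The plan is to reduce each identity to the corresponding classical identity for $\sigma_k$ and then use linearity in the pair $(\sigma_k,\sigma_{k-1})$. Throughout, I use the conventions $\sigma_0=1$ and $\sigma_j=0$ for $j<0$ or $j>n$; these make the edge cases $k=1,2$ consistent.

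\textbf{Items (1) and (2).} Each term of $\sigma_k(\lambda)$ either contains $\lambda_p$ exactly once or does not contain it. Hence $\partial\sigma_k/\partial\lambda_p=\sigma_{k-1}(\lambda|p)$. Applying the same reasoning to $\sigma_{k-1}$ and adding gives
$$\widetilde{S}_k^{pp}=\sigma_{k-1}(\lambda|p)+\alpha e^{2u}\sigma_{k-2}(\lambda|p),$$
which by definition is $\widetilde{S}_{k-1}(\lambda|p)$. Differentiating once more in $\lambda_q$ with $q\neq p$ gives $\sigma_{k-2}(\lambda|pq)+\alpha e^{2u}\sigma_{k-3}(\lambda|pq)=\widetilde{S}_{k-2}(\lambda|pq)$. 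For $p=q$ both sides should be read as $0$, since $\sigma_k$ is affine in each variable.

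\textbf{Item (3).} Splitting the terms of $\sigma_k$ by whether they contain $\lambda_i$ gives $\sigma_k(\lambda)=\lambda_i\sigma_{k-1}(\lambda|i)+\sigma_k(\lambda|i)$. The same splitting applies with $k-1$ in place of $k$. Multiplying the second identity by $\alpha e^{2u}$ and adding yields (3).

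\textbf{Item (4).} A fixed $k$-subset of indices appears in $\sigma_k(\lambda|i)$ exactly when $i$ is not in the subset, which happens for $n-k$ values of $i$. Therefore $\sum_i\sigma_k(\lambda|i)=(n-k)\sigma_k(\lambda)$, and likewise $\sum_i\sigma_{k-1}(\lambda|i)=(n-k+1)\sigma_{k-1}(\lambda)$. Combining these,
$$\sum_i\widetilde{S}_k(\lambda|i)=(n-k)\big(\sigma_k+\alpha e^{2u}\sigma_{k-1}\big)+\alpha e^{2u}\sigma_{k-1}=(n-k)\widetilde{S}_k(\lambda)+\alpha e^{2u}\sigma_{k-1}(\lambda).$$

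\textbf{Item (5).} Sum (3) over $i$ and substitute (4):
$$n\widetilde{S}_k=\sum_i\lambda_i\widetilde{S}_{k-1}(\lambda|i)+(n-k)\widetilde{S}_k+\alpha e^{2u}\sigma_{k-1}.$$
Rearranging gives (5). As a cross-check, Euler's identity $\sum_i\lambda_i\sigma_{k-1}(\lambda|i)=k\sigma_k$ yields the same result directly: $k\sigma_k+(k-1)\alpha e^{2u}\sigma_{k-1}=k\widetilde{S}_k-\alpha e^{2u}\sigma_{k-1}$.

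None of these steps is a real obstacle. The only care needed is the bookkeeping of the extra $\alpha e^{2u}\sigma_{k-1}$ terms in (4) and (5), together with the degenerate index cases, which the conventions above handle.
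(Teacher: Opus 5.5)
Your proof is correct. Each identity follows from the classical $\sigma_k$ identity applied separately to $\sigma_k$ and $\sigma_{k-1}$. The extra $\pm\alpha e^{2u}\sigma_{k-1}$ terms in (4) and (5) come from the coefficient mismatch, $(n-k+1)$ versus $(n-k)$ in (4) and $(k-1)$ versus $k$ in (5), and your conventions for $p=q$ and small $k$ are sound. The paper gives no argument of its own and only cites \cite{RW25a} and \cite{LRW19}, so your direct verification is the standard route and makes the lemma self-contained.
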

\begin{proof}
The proof can be seen in \cite{RW25a} and \cite{LRW19}.
\end{proof}

To solve equation \eqref{eq050601}, we consider a family of equations
\begin{align}\label{eq050701}
F(U^t)=\frac{\widetilde{S}_n(U^t)}{\widetilde{S}_{n-1}(U^t)}=\frac{\sigma_n(U^t)+\alpha e^{2u} \sigma_{n-1}(U^t)}{\sigma_{n-1}(U^t)\
+\alpha e^{2u} \sigma_{n-2}(U^t)}=e^{2u}f^t(x),
\end{align}
where $t\in [0,1]$, $B_g=tB+(1-t)g$ and
$$U^t(u)=\lambda(\Delta u)g+\nabla^{2}u+a(x)du\otimes du+b(x)|\nabla u|^{2}g+B_g,$$
$$f^t(x)=tf(x)+(1-t)\frac{\widetilde{S}_n(B_g)}{\widetilde{S}_{n-1}(B_g)}.$$
%The following lemma says the proper admissible set of \eqref{eq050701} is $\Gamma_{n-1}$ which was first observed by Li-Ren-Wang \cite{LRW19}.
Furthermore, as established by Li-Ren-Wang \cite{LRW19}, the natural admissible set for equation \eqref{eq050701} is the cone $\Gamma_{n-1}$. Then we obtain the following lemma and  omit the proof here.
\begin{lemma}\label{le050901}
The operator $F(U^t)$ in \eqref{eq050701}
%\begin{align*}
%F(U^t)=\frac{\widetilde{S}_n(U^t)}{\widetilde{S}_{n-1}(U^t)}=\frac{\sigma_n(U^t)+\alpha e^{2u}\sigma_{n-1}(U^t)}{\sigma_{n-1}(U^t)\
%+\alpha e^{2u}\sigma_{n-2}(U^t)}
%\end{align*}
is elliptic and concave if the eigenvalues of $g^{-1}\cdot U^t$ belong to $\Gamma_{n-1}$. And for any $n\times n$ symmetric matrix $\xi=(\xi_1,\cdots,\xi_n)$, we have
$$\sum_{i,j,r,s}F^{ij,rs}\xi_{ij}\xi_{rs} \leq 0,$$
where $F^{ij}=\frac{\partial F(U^t)}{\partial (U^t)_{ij}}$, $F^{ij,rs}=\frac{\partial^2F(U^t)}{\partial(U^t)_{ij}\partial(U^t)_{rs}}$.
\end{lemma}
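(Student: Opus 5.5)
The plan is to reduce the statement to a known fact about quotients of elementary symmetric functions in one extra variable, and then pass from eigenvalues to matrices by the standard argument. Fix $t$ and a point $x\in M$, and set $\beta=\alpha(x)e^{2u(x)}\geq 0$. Since $F^{ij}$ and $F^{ij,rs}$ are derivatives in the matrix entries only, $\beta$ is a frozen nonnegative constant. The key observation is that for $\lambda\in\mathbb{R}^n$ and $\lambda'=(\lambda_1,\dots,\lambda_n,\beta)\in\mathbb{R}^{n+1}$,
\[
\sigma_k(\lambda')=\sigma_k(\lambda)+\beta\,\sigma_{k-1}(\lambda)=\widetilde{S}_k(\lambda),
\]
and therefore
\[
f(\lambda):=\frac{\widetilde{S}_n(\lambda)}{\widetilde{S}_{n-1}(\lambda)}=\frac{\sigma_n(\lambda')}{\sigma_{n-1}(\lambda')}.
\]
Moreover, if $\lambda\in\Gamma_{n-1}$ and $\beta\geq0$, then $\sigma_m(\lambda')=\sigma_m(\lambda)+\beta\sigma_{m-1}(\lambda)>0$ for $m=1,\dots,n-1$. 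Hence $\lambda'$ lies in the cone $\Gamma_{n-1}$ of $\mathbb{R}^{n+1}$. (It need not lie in $\Gamma_n$, since $\widetilde S_n$ may be $\le 0$; this is why I work on $\Gamma_{n-1}$.)

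\emph{Ellipticity.} For $i\le n$, using $\sigma_k(\lambda')=\lambda_i\sigma_{k-1}(\lambda'|i)+\sigma_k(\lambda'|i)$, I would compute
\[
\frac{\partial f}{\partial\lambda_i}=\frac{\sigma_{n-1}(\lambda'|i)\sigma_{n-1}(\lambda')-\sigma_n(\lambda')\sigma_{n-2}(\lambda'|i)}{\sigma_{n-1}(\lambda')^2}
=\frac{\sigma_{n-1}(\lambda'|i)^2-\sigma_n(\lambda'|i)\sigma_{n-2}(\lambda'|i)}{\sigma_{n-1}(\lambda')^2}.
\]
Here $\lambda'|i\in\mathbb{R}^n$ lies in $\Gamma_{n-2}$ and has $\sigma_{n-1}(\lambda'|i)>0$, both standard for $\lambda'\in\Gamma_{n-1}$. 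So Lemma \ref{lem2.5} (the Newton--MacLaurin type inequality, with $k=n$, $l=n-1$ in dimension $n$) gives
\[
\sigma_{n-1}(\lambda'|i)^2\ \ge\ c_n\,\sigma_n(\lambda'|i)\sigma_{n-2}(\lambda'|i),\qquad c_n>1 .
\]
If $\sigma_n(\lambda'|i)\sigma_{n-2}(\lambda'|i)\le0$, the numerator is at least $\sigma_{n-1}(\lambda'|i)^2>0$. Otherwise it is at least $(1-1/c_n)\sigma_{n-1}(\lambda'|i)^2>0$. Either way $f_i>0$.

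\emph{Concavity.} I would invoke the classical fact that $\sigma_k/\sigma_{k-1}$ is concave on $\Gamma_{k-1}\subset\mathbb{R}^{N}$; here $N=n+1$ and $k=n$, and this is the Huisken--Sinestrari / Lin--Trudinger result, valid on $\Gamma_{k-1}$ rather than only on $\Gamma_k$. The map $\lambda\mapsto\lambda'$ is affine (last entry constant) and sends $\Gamma_{n-1}$ into $\Gamma_{n-1}$. Composing a concave function with it keeps concavity, so $f$ is concave on $\Gamma_{n-1}\subset\mathbb{R}^n$. It is also symmetric in $\lambda_1,\dots,\lambda_n$.

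\emph{Passage to matrices.} By the standard lemma of Caffarelli--Nirenberg--Spruck (or Ball), a symmetric concave function of the eigenvalues on a convex symmetric cone defines a concave function $F$ of the symmetric matrix. Likewise $f_i>0$ gives $(F^{ij})>0$. Concavity of $F$ is exactly $\sum F^{ij,rs}\xi_{ij}\xi_{rs}\le0$ for every symmetric $\xi$, with eigenvalues taken with respect to $g$.

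I expect the main obstacle to be the concavity of $\sigma_n/\sigma_{n-1}$ on the larger cone $\Gamma_{n-1}$, where the numerator may vanish or be negative. If I cannot simply cite it, I would prove it directly. First, at a diagonal point, $\sigma_{n-1}^{3}\,D^2(\sigma_n/\sigma_{n-1})$ is a quadratic form in which $\sigma_n$ enters linearly. Second, write $\sigma_n/\sigma_{n-1}=\lambda_j-\sigma_{n-1}(\cdot|j)/\sigma_{n-2}(\cdot|j)$-type identities and induct on the dimension, using the concavity already known on $\Gamma_{n-1}$ of $\mathbb{R}^{n}$.
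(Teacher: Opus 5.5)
Your concavity argument is correct, and it is more self-contained than the paper, which omits the proof and only appeals to Li--Ren--Wang. You freeze $\beta=\alpha e^{2u}$, embed $\lambda\mapsto\lambda'=(\lambda,\beta)$, use concavity of $\sigma_n/\sigma_{n-1}$ on $\Gamma_{n-1}\subset\mathbb{R}^{n+1}$, and then pass to matrices on the convex symmetric cone $\Gamma_{n-1}$. One citation fix: the $\Gamma_{k-1}$ concavity result is Huisken--Sinestrari; Lin--Trudinger work on $\Gamma_k$.

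The ellipticity step, however, rests on a false claim. From $\lambda'\in\Gamma_{n-1}(\mathbb{R}^{n+1})$ you only get $\lambda'|i\in\Gamma_{n-2}(\mathbb{R}^n)$. Positivity of $\sigma_{n-1}(\lambda'|i)=\partial\sigma_n(\lambda')/\partial\lambda_i$ is standard only on $\Gamma_n$. For example, take $n=3$, $\lambda=(1,1,-\tfrac25)\in\Gamma_2$ and $\beta=0$. Then $\sigma_2(\lambda'|1)=\sigma_2(1,-\tfrac25,0)=-\tfrac25$, so $\lambda'|1\notin\Gamma_2(\mathbb{R}^3)$ and Lemma \ref{lem2.5} cannot be applied as you do.

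This is not just a missing justification; the conclusion itself fails. When $\beta=0$, your formula reduces to $f_i=\prod_{j\ne i}\lambda_j^2/\sigma_{n-1}(\lambda)^2$. This vanishes at $\lambda=(1,1,0)\in\Gamma_2(\mathbb{R}^3)$ for $i=1$, so strict ellipticity on all of $\Gamma_{n-1}$ is false wherever $\alpha(x)=0$.

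The missing ingredient is $\widetilde S_n>0$. It holds wherever the equation is satisfied, since $F(U^t)=e^{2u}f^t>0$ and $\widetilde S_{n-1}>0$, and that is all the paper ever uses. Under this condition $\lambda'\in\Gamma_n(\mathbb{R}^{n+1})$, so $\lambda'|i\in\Gamma_{n-1}(\mathbb{R}^n)$ and your argument goes through verbatim.

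Alternatively, Newton's inequality $\sigma_{n-1}^2\ge\frac{2n}{n-1}\sigma_n\sigma_{n-2}$ holds for every real vector, so it gives $f_i\ge 0$ on all of $\Gamma_{n-1}$ (degenerate ellipticity). If moreover $\beta>0$, then $f_i=0$ would force $\sigma_n(\lambda'|i)=\sigma_{n-1}(\lambda'|i)=0$, using $\sigma_{n-2}(\lambda'|i)>0$. That means two vanishing $\lambda_j$ with $j\ne i$, which contradicts $\sigma_{n-1}(\lambda)>0$.
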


Next, we present some results to handle the troublesome term that arises from the lack of homogeneity.

\begin{lemma}\label{le051201}
Let  $\eta=(\eta_1,\dots,\eta_n)$ be eigenvalues of $g^{-1}\cdot U^t$. If $\eta \in \Gamma_{n-1}$, $f(x)>0$ and $\alpha(x)\geq 0$ are smooth functions, then \\
(1) $\frac{1}{n}\leq \sum_i F^{ii}\leq 2 $;\\
(2) $\sum_{i}F^{ii}\eta_{i}\leq 2e^{2u}f^t$.
\end{lemma}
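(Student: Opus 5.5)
The plan is to compute everything explicitly in terms of $\sigma_{n-3},\dots,\sigma_n$ of $\eta$, using the identities of Lemma \ref{le050801}, and then to conclude with the Newton--MacLaurin type inequalities of Lemma \ref{lem2.5}. Write $\beta=\alpha e^{2u}\geq 0$, so that $\widetilde S_k=\sigma_k+\beta\sigma_{k-1}$ and $F=\widetilde S_n/\widetilde S_{n-1}$.

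\emph{Signs.} Since $\eta\in\Gamma_{n-1}$ and $\beta\ge0$, the quantities $\sigma_{n-3},\sigma_{n-2},\sigma_{n-1}$, $\widetilde S_{n-2}$ and $\widetilde S_{n-1}$ are all positive (with $\sigma_0=1$, since $n\geq 3$). Moreover $\widetilde S_n=e^{2u}f^t\widetilde S_{n-1}>0$ along the equation.

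\emph{Derivatives of $F$.} By Lemma \ref{le050801}(1),
$$F^{ii}=\frac{\widetilde S_{n-1}(\eta|i)\widetilde S_{n-1}-\widetilde S_n\widetilde S_{n-2}(\eta|i)}{\widetilde S_{n-1}^2}.$$
Summing over $i$ and using Lemma \ref{le050801}(4) with $k=n-1,n-2$ gives
$$\sum_iF^{ii}=\frac{\widetilde S_{n-1}(\widetilde S_{n-1}+\beta\sigma_{n-2})-\widetilde S_n(2\widetilde S_{n-2}+\beta\sigma_{n-3})}{\widetilde S_{n-1}^2}.$$

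\emph{Upper bound in (1).} Drop the nonpositive term $-\widetilde S_n(\cdots)$. This gives $\sum_iF^{ii}\le 1+\beta\sigma_{n-2}/(\sigma_{n-1}+\beta\sigma_{n-2})\le 2$.

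\emph{Part (2).} Lemma \ref{le050801}(5) gives $\sum_i\eta_i\widetilde S_{n-1}(\eta|i)=n\widetilde S_n-\beta\sigma_{n-1}$ and $\sum_i\eta_i\widetilde S_{n-2}(\eta|i)=(n-1)\widetilde S_{n-1}-\beta\sigma_{n-2}$. Hence
$$\sum_iF^{ii}\eta_i=F+\beta\,\frac{\widetilde S_n\sigma_{n-2}-\widetilde S_{n-1}\sigma_{n-1}}{\widetilde S_{n-1}^2}=F+\beta\,\frac{\sigma_n\sigma_{n-2}-\sigma_{n-1}^2}{\widetilde S_{n-1}^2}.$$
Lemma \ref{lem2.5} with $k=n$, $l=n-1$ gives $\sigma_{n-1}^2\ge\frac{2n}{n-1}\sigma_n\sigma_{n-2}\geq \sigma_n\sigma_{n-2}$ when $\sigma_n\ge0$, and the inequality $\sigma_{n-1}^2\geq\sigma_n\sigma_{n-2}$ is trivial when $\sigma_n<0$. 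Therefore $\sum_iF^{ii}\eta_i\le F=e^{2u}f^t\le 2e^{2u}f^t$.

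\emph{Lower bound in (1) --- the main step.} Expand the numerator $N$ and the denominator $D=\widetilde S_{n-1}^2$ in powers of $\beta$ and compare coefficients. A direct expansion gives
$$N=\big(\sigma_{n-1}^2-2\sigma_n\sigma_{n-2}\big)+\beta\big(\sigma_{n-1}\sigma_{n-2}-3\sigma_n\sigma_{n-3}\big)+\beta^2\big(2\sigma_{n-2}^2-3\sigma_{n-1}\sigma_{n-3}\big),$$
$$D=\sigma_{n-1}^2+2\beta\sigma_{n-1}\sigma_{n-2}+\beta^2\sigma_{n-2}^2.$$
It suffices to show each coefficient of $N$ dominates $\frac1n$ times the corresponding coefficient of $D$. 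Each comparison is one instance of Lemma \ref{lem2.5}.
\begin{itemize}
\item[(i)] With $(k,l)=(n,n-1)$: $\sigma_n\sigma_{n-2}\le\frac{n-1}{2n}\sigma_{n-1}^2$, so the $\beta^0$ coefficient is at least $\frac1n\sigma_{n-1}^2$. This is trivial if $\sigma_n\le0$.
\item[(ii)] With $(k,l)=(n,n-2)$: $\sigma_n\sigma_{n-3}\le\frac{n-2}{3n}\sigma_{n-1}\sigma_{n-2}$, so the $\beta^1$ coefficient is at least $\frac2n\sigma_{n-1}\sigma_{n-2}$.
\item[(iii)] With $(k,l)=(n-1,n-2)$, which is legitimate since $\eta\in\Gamma_{n-2}$: $\sigma_{n-1}\sigma_{n-3}\le\frac{2(n-2)}{3(n-1)}\sigma_{n-2}^2$, so the $\beta^2$ coefficient is at least $\frac{2}{n-1}\sigma_{n-2}^2\ge\frac1n\sigma_{n-2}^2$.
\end{itemize}
Adding (i)--(iii) gives $N\ge D/n$, that is, $\sum_iF^{ii}\ge\frac1n$.

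The only delicate points are tracking the binomial constants in Lemma \ref{lem2.5} correctly and handling the sign of $\sigma_n$ in (i), where the case $\sigma_n\le 0$ makes the inequality immediate.
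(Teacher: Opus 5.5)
Your proof is correct and follows the paper's argument: you use the same expansion of $\sum_i F^{ii}$, the same three instances of Lemma \ref{lem2.5} for the lower bound $\frac1n$, and the same dropping of $-\widetilde S_n(2\widetilde S_{n-2}+\beta\sigma_{n-3})$ for the upper bound $2$. The only deviation is in part (2). There you use the exact cancellation $\widetilde S_n\sigma_{n-2}-\widetilde S_{n-1}\sigma_{n-1}=\sigma_n\sigma_{n-2}-\sigma_{n-1}^2\le 0$ to get the sharper bound $\sum_iF^{ii}\eta_i\le e^{2u}f^t$, whereas the paper discards $-\beta\widetilde S_{n-1}\sigma_{n-1}$ and bounds $\beta\sigma_{n-2}\le\widetilde S_{n-1}$, losing a factor of $2$.
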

\begin{proof}
(1) From  Lemma \ref{le050801}
\begin{align*}
\sum_i F^{ii}=&\,\frac{1}{\widetilde{S}^2_{n-1}}[\widetilde{S}_{n-1}(\widetilde{S}_{n-1}+\widetilde{\alpha}\sigma_{n-2})
-\widetilde{S}_{n}(2\widetilde{S}_{n-2}+\widetilde{\alpha}\sigma_{n-3})]\\
=&\,\frac{1}{\widetilde{S}^2_{n-1}}[\sigma^2_{n-1}-2\sigma_n\sigma_{n-2}
+\widetilde{\alpha}\sigma_{n-1}\sigma_{n-2}-3\widetilde{\alpha}\sigma_n\sigma_{n-3}\\
&\,+2\widetilde{\alpha}^2\sigma^2_{n-2}-3\widetilde{\alpha}^2\sigma_{n-1}\sigma_{n-3}],
\end{align*}
where $\widetilde{\alpha}=\alpha e^{2u}$. Accord to Lemma \ref{lem2.5}
%\begin{align*}
%&\sigma_{n-1}^2\geq \frac{2n}{n-1}\sigma_n\sigma_{n-2},\\
%&\sigma_{n-1}\sigma_{n-2}\geq \frac{3n}{n-2}\sigma_n\sigma_{n-3},\\
%&\sigma_{n-2}^2\geq \frac{3(n-1)}{2(n-2)}\sigma_{n-1}\sigma_{n-3},
%\end{align*}
%which follows
\begin{align*}
\sum_i F^{ii}
\geq&\, \frac{1}{\widetilde{S}^2_{n-1}}\left[\frac{1}{n}\sigma_{n-1}^2+\frac{2}{n}\widetilde{\alpha}\sigma_{n-1}\sigma_{n-2}+\frac{2}{n-1}\widetilde{\alpha}^2\sigma_{n-2}^2\right]\\
=&\, \frac{1}{\widetilde{S}^2_{n-1}}\left[\frac{1}{n}\sigma_{n-1}(\sigma_{n-1}+\widetilde{\alpha}\sigma_{n-2})+\widetilde{\alpha}\sigma_{n-2}\left(\frac{1}{n}\sigma_{n-1}+\frac{2}{n-1}\widetilde{\alpha}\sigma_{n-2}\right)\right]\\
\geq &\,
\frac{1}{\widetilde{S}^2_{n-1}}\left[\frac{1}{n}\sigma_{n-1}\widetilde{S}_{n-1}+\frac{1}{n}\widetilde{\alpha}\sigma_{n-2}\widetilde{S}_{n-1}\right]\\
=&\, \frac1n.
\end{align*}
For the upper bound, we have
\begin{align*}
\sum_i F^{ii}=&\,\frac{1}{\widetilde{S}^2_{n-1}}[\widetilde{S}_{n-1}(\widetilde{S}_{n-1}+\widetilde{\alpha}\sigma_{n-2})
-\widetilde{S}_{n}(2\widetilde{S}_{n-2}+\widetilde{\alpha}\sigma_{n-3})]\\
=&\,\frac{1}{\widetilde{S}^2_{n-1}}[\widetilde{S}_{n-1}(2\widetilde{S}_{n-1}-\sigma_{n-1})
-2\widetilde{S}_{n}\widetilde{S}_{n-2}-\widetilde{\alpha}\widetilde{S}_{n}\sigma_{n-3})]\\
\leq&\, 2.
\end{align*}

(2) From equation \eqref{eq050701} and Lemma \ref{le050801}, a straight forward calculation shows
\begin{align*}
\sum_iF^{ii}\eta_{i}&=\frac{\widetilde{S}_{n-1}(n\widetilde{S}_{n}-\widetilde{\alpha} \sigma_{n-1})-\widetilde{S}_{n}[(n-1)\widetilde{S}_{n-1}-\widetilde{\alpha}\sigma_{n-2}]}{\widetilde{S}_{n-1}^{2}}\\
&=e^{2u}f^t+\frac{1}{\widetilde{S}_{n-1}^{2}}[\widetilde{\alpha}\widetilde{S}_{n}\sigma_{n-2}-\widetilde{\alpha}\widetilde{S}_{n-1}\sigma_{n-1}]\\
&\leq e^{2u}f^t+\frac{\widetilde{\alpha}\widetilde{S}_n\sigma_{n-2}}{\widetilde{S}_{n-1}^2}\\
&=e^{2u}f^t+\frac{\widetilde{S}_n(\widetilde{S}_{n-1}-\sigma_{n-1})}{\widetilde{S}_{n-1}^2}\\
&\leq2e^{2u}f^t.
\end{align*}
Therefore, we finish the proof.
\end{proof}

%%%%%%%%%%%%%%%%%%%%%%%%%%%%%%%%%%%%%%%%%%%%%%%%%%%%%%%%%%%%%%%%%%%%%%%%%%%%%%%%%%%%%%%%%%%%%%%%%%%%%%%%%%%%%%%%%%%%%%%%%%%%%%%%%%%%%%%%%%%%%%%%%%%%%%%%%%%%%%%%%%%%
\section{Proof of Estimates}
\subsection{Proof of $C^0$ Estimates}
In the subsection, we establish the following theorem which provides uniform $C^0$ bounds to equation \eqref{eq050701}.
\begin{theorem}\label{thm051101}
%Let $(M,g)$ be a smooth closed Riemannian manifold of dimension $n\geq 3$,
Let $f(x)>0$ and $\alpha(x)\geq 0$ be smooth functions on $M$. If $u$ is a $(n-1)$-admissible solution of \eqref{eq050701} with $\lambda>0$ and $B\in \Gamma_{n}$, then
\begin{equation}
\sup_M |u|\leq C,
\end{equation}
where $C$ depends on $g$, $B$, $\sup \alpha$, $\inf f$, $\|f\|_{C^0}$.
\end{theorem}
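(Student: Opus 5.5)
We argue by the maximum principle at the extremal points of $u$, using the structure of $F$ at points where $\nabla u=0$ and $\nabla^2u$ has a sign. Write $\widetilde{\alpha}=\alpha e^{2u}$ and $\eta=\lambda(g^{-1}\cdot U^t)\in\Gamma_{n-1}$. The basic observation is that $F=\widetilde{S}_n/\widetilde{S}_{n-1}$ is increasing in $\eta$ on $\Gamma_{n-1}$ (Lemma \ref{le050901}). Since $B,g\in\Gamma_n$ and $M$ is compact, $B_g=tB+(1-t)g$ satisfies $c_0 g\le B_g\le C_0 g$ uniformly in $t\in[0,1]$. Likewise $f^t$ is bounded above and below by positive constants independent of $t$: $\widetilde{S}_n(B_g)/\widetilde{S}_{n-1}(B_g)$ is a positive quantity comparable to the eigenvalues of $B_g$, and the $\alpha$-dependence is controlled by $\sup\alpha$, $\sup u$ bounds are not needed here since the ratio for $\eta\in\Gamma_n$ lies between $\min_i\eta_i/n$-type bounds and $\max\eta_i$.

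\emph{Upper bound.} Let $x_0$ be a maximum point of $u$. There $\nabla u=0$ and $\nabla^2u\le 0$, so $\Delta u\le0$. Hence
\[
U^t(x_0)=\lambda(\Delta u)g+\nabla^2u+B_g\le B_g\le C_0 g.
\]
By monotonicity of $F$ on $\Gamma_{n-1}$ (using that $\Gamma_{n-1}+\Gamma_n\subset\Gamma_{n-1}$, so the segment from $\eta(U^t)$ to $\eta(C_0 g)$ stays admissible), we get $F(U^t)\le F(C_0 g)$. Explicitly,
\[
F(C_0g)=\frac{C_0^n+\widetilde{\alpha}nC_0^{n-1}}{nC_0^{n-1}+\widetilde{\alpha}\binom n2 C_0^{n-2}},
\]
which is at most $\max\{C_0/n,\,2C_0/(n-1)\}\le C_0$, independently of $\widetilde{\alpha}$: it is a ratio of linear functions of $\widetilde{\alpha}$, hence lies between its values at $\widetilde{\alpha}=0$ and $\widetilde{\alpha}=\infty$. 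Therefore $e^{2u(x_0)}\inf f^t\le C_0$, which bounds $\sup u$.

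\emph{Lower bound.} Let $x_1$ be a minimum point of $u$. There $\nabla u=0$ and $\nabla^2 u\ge0$, so
\[
U^t(x_1)\ge B_g\ge c_0 g\in\Gamma_n.
\]
Monotonicity gives $F(U^t)\ge F(c_0g)\ge \min\{c_0/n,\,2c_0/(n-1)\}=c_0/n$, by the same linear-fraction argument. Hence $e^{2u(x_1)}\sup f^t\ge c_0/n$, which bounds $\inf u$ from below.

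The only delicate point is the monotonicity comparison on $\Gamma_{n-1}$ rather than $\Gamma_n$. We handle it by noting that $U^t(x_0)$ and $C_0 g$ differ by a nonnegative matrix, so $t\mapsto F(U^t+s(C_0g-U^t))$ has nonnegative derivative since $F^{ij}>0$ by ellipticity. We also check that $\widetilde{S}_{n-1}>0$ along the path, which holds because $\Gamma_{n-1}$ is a convex cone containing $\Gamma_n$. The second difficulty is the uniformity in $\widetilde{\alpha}$, which depends on $u$; this is dispatched by the linear-fraction bound above, which is independent of $\alpha$. With both points settled, the constants depend only on $g$, $B$, $\inf f$ and $\|f\|_{C^0}$.
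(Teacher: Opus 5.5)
Your proposal is correct, but it is organized differently from the paper's proof.

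\textbf{How the paper argues.} At the extremal points the paper compares $U^t$ directly with $B_g$. This way the term $(1-t)\widetilde{S}_n(B_g)/\widetilde{S}_{n-1}(B_g)$ built into $f^t$ partially cancels against $F(B_g)$. It then splits into the cases $e^{2u}\le 1$ and $e^{2u}\ge 1$. For the upper bound it adds a further split according to the sign of $e^{2u}f\sigma_{n-2}(B_g)-\sigma_{n-1}(B_g)$. This is why its lower bound depends on $\sup\alpha$.

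\textbf{How you argue.} You compare with the constant multiples $c_0g$ and $C_0g$. Your key observation is that, at a fixed matrix, $\widetilde{S}_n/\widetilde{S}_{n-1}$ is a linear-fractional function of $\widetilde{\alpha}\in[0,\infty)$, so it lies between $\sigma_n/\sigma_{n-1}$ and $\sigma_{n-1}/\sigma_{n-2}$. This gives bounds on $F(c_0g)$, $F(C_0g)$ and $f^t$ that are uniform in $\widetilde{\alpha}=\alpha e^{2u}$. It removes all the case analysis and yields a constant independent of $\sup\alpha$.

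\textbf{What the paper's route buys.} The sharp comparison with $B_g$ itself, at $t=0$, gives exactly $e^{2u}\ge 1$ at the minimum and $e^{2u}\le 1$ at the maximum. That is the uniqueness statement of Theorem \ref{thm051102}. Your cruder bounds $F(c_0g)\ge c_0/n$ and $f^t\le\sup f^t$ discard this.

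\textbf{Minor presentation points.}
\begin{itemize}
\item Your first paragraph's justification that $f^t$ is bounded above and below is muddled. It is cleanest to fix $\widetilde{\alpha}$ and use the same monotonicity to get $F(c_0g)\le \widetilde{S}_n(B_g)/\widetilde{S}_{n-1}(B_g)\le F(C_0g)$. Your linear-fraction bounds then give $\min\{\inf f,\,c_0/n\}\le f^t\le \max\{\sup f,\,2C_0/(n-1)\}$.
\item In the last paragraph the parameter along the segment should be $s$, not $t$.
\item For admissibility of the segment, convexity of $\Gamma_{n-1}$ alone suffices, since both endpoints already lie in it.
\end{itemize}
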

\begin{proof}
Assume that the minimum of $u$ is attained at $x_0\in M$, thus
$$ \nabla u(x_0)=0, \quad \nabla^{2}u(x_0)\geq 0,$$
which implies $U^t \geq B_g$ at $x_0$.
By Lemma \ref{le050901}, we show that $F$ is elliptic and  concave in $\Gamma_{n-1}$, then we have the inequality
\begin{equation}
e^{2u(x_0)}\left[tf(x_0)+(1-t)\frac{\widetilde{S}_n(B_g)}{\widetilde{S}_{n-1}(B_g)}\right]=\frac{\widetilde{S}_n(U^t)}{\widetilde{S}_{n-1}(U^t)}\geq \frac{\widetilde{S}_n(B_g)}{\widetilde{S}_{n-1}(B_g)}.
\end{equation}
Without loss of generality, we suppose that $0<e^{2u(x_0)} \leq 1$, otherwise $e^{2u(x_0)} \geq 1$. It yields that
%begin{align*}
%e^{2u(x_0)}f(x_0)\left[\sigma_{n-1}(B_g)+\alpha e^{2u(x_0)} \sigma_{n-2}(B_g)\right]\geq \sigma_n(B_g)+\alpha e^{2u} \sigma_{n-1}(B_g)
%\geq\sigma_n(B_g),
%\end{align*}
\begin{align*}
e^{2u(x_0)}f(x_0)\sigma_{n-1}(B_g)+\alpha e^{2u(x_0)}f(x_0) \sigma_{n-2}(B_g)
\geq\sigma_n(B_g).
\end{align*}
Hence $e^{2u(x_0)}\geq C$, where $C$ is a positive constant depending on $g, B, \max \alpha, \max f$.

Similarly, we suppose the maximum of $u$ is attained at $x_1$ and $e^{2u(x_1)} \geq 1$, thus
$$e^{2u(x_1)}f(x_1)\left[\sigma_{n-1}(B_g)+\alpha e^{2u(x_1)} \sigma_{n-2}(B_g)\right]\leq \sigma_{n}(B_g)+\alpha e^{2u(x_1)} \sigma_{n-1}(B_g),$$
which follows
$$e^{2u(x_1)}f(x_1)\sigma_{n-1}(B_g)+\alpha e^{2u(x_1)}\left[e^{2u(x_1)}f(x_1)\sigma_{n-2}(B_g)-\sigma_{n-1}(B_g)\right]\leq \sigma_{n}(B_g).$$
If $e^{2u(x_1)}f(x_1)\sigma_{n-2}(B_g)-\sigma_{n-1}(B_g)\leq 0$, we obtain $e^{2u(x_1)} \leq C$. Otherwise
$$e^{2u(x_1)}f(x_1)\sigma_{n-1}(B_g)\leq \sigma_{n}(B_g),$$
which implies $e^{2u(x_1)}\leq C$, where $C$ is a positive constant depending on $ g, B, \inf f$.
\end{proof}

Next, we present the following uniqueness result.
\begin{theorem}\label{thm051102}
$u=0$ is the unique solution for
\begin{align}
F(U^0)=\frac{\widetilde{S}_n(U^0)}{\widetilde{S}_{n-1}(U^0)}=\frac{\sigma_n(U^0)+\alpha e^{2u} \sigma_{n-1}(U^0)}{\sigma_{n-1}(U^0)\
+\alpha e^{2u} \sigma_{n-2}(U^0)}=e^{2u}f^0(x).
\end{align}
\end{theorem}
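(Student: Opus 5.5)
The plan is a maximum principle comparison at the extremal points of $u$, exploiting the fact that at $t=0$ we have $B_g=g$, so $u\equiv 0$ gives $U^0=g$. First I will fix the meaning of $f^0$. Since $\widetilde{S}_k$ carries the weight $\alpha e^{2u}$, I read $f^0(x)=\widetilde{S}_n(g)/\widetilde{S}_{n-1}(g)$ with the weight evaluated at $u=0$. Writing $c=C_n^2=\frac{n(n-1)}{2}$, this is
$$f^0=h(\alpha)\quad\text{with}\quad h(s):=\frac{1+ns}{n+cs}.$$
With this reading, $u=0$ is clearly a solution. It remains to show that any $(n-1)$-admissible solution satisfies $\max_M u\le 0$ and $\min_M u\ge 0$.

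\textbf{Maximum.} Let $x_1$ be a maximum point of $u$ and set $\mu=e^{2u(x_1)}$. There $\nabla u=0$ and $\nabla^2 u\le 0$, hence $\Delta u\le 0$. Since $\lambda>0$, this gives $U^0\le g$ at $x_1$. For fixed weight $s=\alpha(x_1)\mu$, the function
$$A\mapsto \frac{\sigma_n(A)+s\sigma_{n-1}(A)}{\sigma_{n-1}(A)+s\sigma_{n-2}(A)}$$
is elliptic on $\Gamma_{n-1}$ by Lemma \ref{le050901}. The segment from $U^0(x_1)\in\Gamma_{n-1}$ to $g\in\Gamma_n$ lies in the convex cone $\Gamma_{n-1}$. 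Integrating along it yields
$$\mu f^0(x_1)=F(U^0)(x_1)\le \frac{1+ns}{n+cs}=h(\alpha\mu).$$
Thus $\mu h(\alpha)\le h(\alpha\mu)$, with $\alpha=\alpha(x_1)$.

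\textbf{The algebraic step (main point).} I will show that $\mu h(\alpha)-h(\alpha\mu)$ has the sign of $\mu-1$. Note that $h$ itself is increasing in $s$, since $n^2-c>0$, so the comparison is not immediate. Cross-multiplying gives
$$\mu(1+n\alpha)(n+c\alpha\mu)-(1+n\alpha\mu)(n+c\alpha)=n(\mu-1)+c\alpha(\mu^2-1)+nc\alpha^2\mu(\mu-1).$$
When $\mu>1$ this is strictly positive (already because $n(\mu-1)>0$, using $\alpha\ge0$), which contradicts $\mu h(\alpha)\le h(\alpha\mu)$. Hence $\mu\le1$, that is, $u\le 0$ on $M$.

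\textbf{Minimum.} Symmetrically, at a minimum point $x_0$ we have $\nabla^2u\ge0$, so $U^0\ge g$, and ellipticity gives $\mu h(\alpha)\ge h(\alpha\mu)$. If $\mu<1$, the same identity makes the left side strictly smaller, a contradiction. Hence $u\ge0$, and therefore $u\equiv0$.

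The only delicate points are the correct interpretation of $f^0$ and the non-homogeneity caused by the $e^{2u}$ in the weight. The identity above handles the latter, and the former is consistent with $u=0$ being a solution.
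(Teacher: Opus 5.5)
Your proof is correct and uses the same maximum-principle comparison as the paper: at a maximum (resp.\ minimum) point, $\nabla u=0$ gives $U^0\le g$ (resp.\ $U^0\ge g$), and ellipticity of $F$ on $\Gamma_{n-1}$ then yields $e^{2u}\le 1$ (resp.\ $e^{2u}\ge 1$) there. The only difference is how you read $f^0$: in the paper $\widetilde{S}_k$ always carries the weight $\alpha e^{2u}$, so $f^0=\widetilde{S}_n(g)/\widetilde{S}_{n-1}(g)=h(\alpha e^{2u})$ (consistent with the later bound $[\widetilde{S}_n(B_g)/\widetilde{S}_{n-1}(B_g)]_k\ge -C|\nabla u|$ and with the zeroth-order coefficient $-2h(\alpha)$ in the linearization at $t=0$); the comparison then reads $\mu h(\alpha\mu)\le h(\alpha\mu)$ and concludes at once, while your identity for $\mu h(\alpha)-h(\alpha\mu)$ is what the frozen-weight reading $f^0=h(\alpha)$ requires, so your argument covers both versions.
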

\begin{proof}
If the minimum of $u$ is attained at $x_0$, thus $\nabla u(x_0)=0$, $\nabla^2 u(x_0)\geq0$, which means
$U^0\geq g$ at the point $x_0$. From Lemma \ref{le050901}, we get
$$e^{2u(x_0)}\frac{\widetilde{S}_n(g)}{\widetilde{S}_{n-1}(g)}=\frac{\widetilde{S}_n(U^0)}{\widetilde{S}_{n-1}(U^0)}\geq \frac{\widetilde{S}_n(g)}{\widetilde{S}_{n-1}(g)},$$
which yields $e^{2u(x_0)}\geq 1$. On the other hand, we assume that the maximum of $u$ is attained at $x_1$, then $e^{2u(x_1)}\leq 1$. Consequently, we finish the proof.
\end{proof}

\subsection{Proof of $C^1$ estimates }
We show  $C^{1}$ estimates for sum Hessian quotient equations \eqref{eq050701} in this subsection.

\begin{theorem}\label{thm050701}
Let $f(x)>0$ and $\alpha(x)\geq 0$ be smooth functions on $M$.
If $u\in C^{3}(B_{r})$ is a $(n-1)$-admissible solution of equation \eqref{eq050701} in $B_{r}\subset M$ with $\lambda>0$ and $B\in \Gamma_{n}$, then
\begin{equation*}
  \sup_{B_{r/2}}|\nabla u|\leq C,
\end{equation*}
where $C$ is a positive constant depending on $\lambda$, $g$, $\|\alpha\|_{C^{1}}$, $\|a\|_{C^{1}}$, $\|b\|_{C^{1}}$, $\|B\|_{C^{1}}$, $\|f\|_{C^1}$  and $\|u\|_{C^{0}}$.
\end{theorem}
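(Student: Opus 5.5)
We prove the interior gradient bound by a maximum principle argument applied to a localized auxiliary function. Fix a cutoff $\rho\in C_0^\infty(B_r)$ with $0\le\rho\le1$, $\rho\equiv1$ on $B_{r/2}$, and $|\nabla\rho|\le C\rho^{1/2}/r$, $|\nabla^2\rho|\le C/r^2$. Consider
$$\Phi=\rho\,|\nabla u|^2 e^{\varphi(u)},$$
where $\varphi$ is an auxiliary function of $u$ to be chosen; for instance $\varphi(u)=Nu$ with $N$ large, or $\varphi(u)=(1+u-\inf u)^{-\beta}$-type. By Theorem \ref{thm051101}, $u$ is bounded, so $\varphi$ and its derivatives are controlled.

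Let $\Phi$ attain its maximum at $x_0\in B_r$, and assume $|\nabla u|(x_0)$ is large. Choose normal coordinates at $x_0$ with $U^t=(U_{ij})$ diagonal. The first-order conditions give
$$\frac{2\sum_k u_ku_{ki}}{|\nabla u|^2}=-\frac{\rho_i}{\rho}-\varphi' u_i.$$
Since $U$ is linear in $\nabla^2u$ with the term $\lambda\Delta u\,g$, the linearized operator is
$$L=\sum_i G^{ii}\nabla_{ii},\qquad G^{ii}=F^{ii}+\lambda\sum_kF^{kk}.$$
Hence $G^{ii}\ge\lambda\sum_kF^{kk}\ge\lambda/n$ by Lemma \ref{le051201}(1), so $L$ is uniformly elliptic with bounded coefficients; this is exactly where $\lambda>0$ is used.

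Next, differentiate equation \eqref{eq050701} in direction $e_k$ and contract with $u_k$. The third derivatives of $u$ cancel against $\sum_kG^{ii}u_ku_{kii}$ after commuting derivatives, at the cost of curvature terms $O(|\nabla u|^2)$. The gradient terms $a\,du\otimes du+b|\nabla u|^2g$ contribute $2\sum_i G^{ii}(a u_iu_{ik}+b u_lu_{lk})u_k$-type terms, which are handled via the first-order condition. The right-hand side $e^{2u}f^t$ and the $\alpha e^{2u}$ dependence of $F$ produce terms bounded by $C(1+|\nabla u|)|\nabla u|$ using Lemma \ref{le051201}(2) together with the bound $|\partial F/\partial(\alpha e^{2u})|\le C$, which we check directly from the quotient form. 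The good term is $\sum_iG^{ii}u_{ki}^2$ coming from $L|\nabla u|^2$.

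The crucial positive contribution should come from $L\varphi(u)=\varphi'\sum G^{ii}u_{ii}+\varphi''\sum G^{ii}u_i^2$. We rewrite $u_{ii}$ using $U_{ii}$, namely
$$u_{ii}=U_{ii}-\lambda\Delta u-a u_i^2-b|\nabla u|^2-B_{ii},$$
and use $\sum F^{ii}U_{ii}\le 2e^{2u}f^t$. This produces terms like $-\varphi'\sum G^{ii}(a u_i^2+b|\nabla u|^2)$, which may have either sign. I expect the main obstacle to be exactly here: $a$ and $b$ have no sign assumption, so $\varphi$ must be chosen so that $\varphi''\sum G^{ii}u_i^2\ge c\,\varphi''|\nabla u|^2$ (valid since $G^{ii}\ge\lambda/n$) dominates $|\varphi'|(|a|+n|b|)\,C\,|\nabla u|^2$. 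Hence I would pick $\varphi$ with $\varphi''\ge K(\varphi')^2$ and $\varphi''\ge K|\varphi'|$ for large $K$, such as $\varphi=e^{N u}$ suitably normalized using the $C^0$ bound. The squared first-order terms $(\varphi')^2\sum G^{ii}u_i^2$, which arise when we replace $\sum G^{ii}u_{ki}^2$ by means of the critical point identity, must then be absorbed by $\varphi''$; the cutoff terms are bounded by $C\rho^{-1}$. Collecting everything gives $0\ge L\log\Phi\ge c|\nabla u|^2-C-C\rho^{-1}$ at $x_0$, so $\rho|\nabla u|^2\le C$ there, and hence $|\nabla u|\le C$ on $B_{r/2}$.
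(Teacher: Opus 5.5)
Your proposal is essentially the paper's proof: the paper uses $Q=\rho\,v\,e^{\phi(u)}$ with $v=1+\frac12|\nabla u|^2$ and $\phi=(1+u-\inf u)^{-N}$. It then uses the lower bound $\widetilde F^{ii}\ge\lambda\sum_kF^{kk}\ge\lambda/n$, the critical-point identity to turn the $a,b$ third-derivative terms into $\phi'$-terms, and $\phi''-\frac34(\phi')^2\ge\frac{N}{4(1+u-\inf u)}(-\phi')$ to absorb them, exactly as you outline. There is one sign slip to fix: the paper's $\phi$ is decreasing, so the upper bound $\sum_iF^{ii}U_{ii}\le 2e^{2u}f^t$ is what the term $\phi'\sum_iF^{ii}U_{ii}$ needs, whereas with your increasing normalized exponential you need the lower bound $\sum_iF^{ii}U_{ii}\ge e^{2u}f^t-\alpha e^{2u}$ (true by the identity in the proof of Lemma \ref{le051201}(2) and $\widetilde S_n>0$), or you can take $\varphi=\delta e^{-N(u-\inf u)}$; separately, your bound $0\le\partial F/\partial(\alpha e^{2u})\le1$ handles a term the paper drops when it differentiates the equation.
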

\begin{proof}
Let $\rho(x)\in C^{\infty}_{0}(B_{r})$ be a cut off function satisfies $\rho=1$ in $B_{r/2}$ and $\rho=0$ outside $B_{r}$.
Consider the auxiliary function
$$Q(x)=\rho(x)ve^{\phi(u)},$$
where $v=1+\frac{|\nabla u|^2}{2}$,  $\phi(u)=[\varphi(u)]^{-N}$ and  $\varphi(u)=1+u-\inf_{\{\rho>0\}}u$, $N\gg1$. For convenience, denote
$$\phi'=\frac{d \phi(u)}{d u}, \quad \phi''=\frac{d^2 \phi(u)}{d u^2},$$
thus
\begin{equation*}
  \phi''-\frac{3}{4}(\phi')^{2}%=-\phi'\left(\frac{N+1}{P}+\frac{3}{4}\phi'\right)
  = -\phi'\left(\frac{N+1}{\varphi}-\frac{3N}{4\varphi^{N+1}}\right)\geq -\frac{N\phi'}{4\varphi}%=\frac{N^{2}}{4P^{N+2}}
  >0.
\end{equation*}
Define
$$\widetilde{Q}=\log Q(x)=\log \rho(x)+\log v+\phi(u).$$
Assume that the maximum of $\widetilde{Q}$ is attained at an interior point $x_0 \in B_{r}$ and  $U^t(u(x_0))$ is diagonal. From now on, all calculations are performed at $x_0$.
\begin{equation}\label{eq4.1}
0=\widetilde{Q}_{i}=\frac{\rho_{i}}{\rho}+\frac{v_{i}}{v}+\phi'u_{i},
\end{equation}
and
\begin{equation}\label{eq4.3}
  0\geq \widetilde{F}^{ij}\widetilde{Q}_{ij}
  =\widetilde{F}^{ij}\left(\frac{\rho_{ij}\rho-\rho_{i}\rho_{j}}{\rho^{2}}+\frac{v_{ij}v-v_{i}v_{j}}{v^{2}}+\phi^{''}u_{i}u_{j}+\phi'u_{ij}\right),
\end{equation}
where
$$\widetilde{F}^{ij}=\frac{\partial F}{\partial u_{ij}}=\frac{\partial F}{\partial (U^t)_{rs}}\frac{\partial (U^t)_{rs}}{\partial u_{ij}}
=\lambda F^{rs}g_{rs}\delta_{ij}+F^{ij}.$$
By \eqref{eq4.1} and the Cauchy-Schwartz inequality
\begin{align}\label{eq4.4}
  \widetilde{F}^{ij}\frac{v_{i}v_{j}}{2v^{2}}%=&\,\frac{1}{2}\widetilde{F}^{ij}\left[\frac{\rho_{i}\rho_{j}}{\rho^{2}}+(\phi')^{2}u_{i}u_{j}+\phi' \frac{u_{i}\rho_{j}}{\rho}+\phi' \frac{u_{j}\rho_{i}}{\rho}\right]\\
 \leq\frac{3}{2}\widetilde{F}^{ij}\frac{\rho_{i}\rho_{j}}{\rho^{2}}+\frac{3}{4}(\phi')^{2} \widetilde{F}^{ij}u_{i}u_{j},
\end{align}
thus
\begin{align}\label{eq4.11}
\nonumber0\geq&\,\widetilde{F}^{ij}\left(\frac{\rho_{ij}}{\rho}-\frac{5}{2}\frac{\rho_{i}\rho_{j}}{\rho^{2}}\right)+\frac{1}{v}\sum_k\widetilde{F}^{ij}u_{ki}u_{kj}+\frac{1}{v}\sum_ku_{k}\widetilde{F}^{ij}u_{kij}+\phi'\widetilde{F}^{ij}u_{ij}\\
\nonumber&\,+\left[\phi^{''}-\frac{3(\phi')^{2}}{4}\right]\widetilde{F}^{ij}u_{i}u_{j}-\frac{1}{2v^2}\widetilde{F}^{ij}\left(\sum_ku_ku_{ki}\right)\left(\sum_lu_lu_{lj}\right)\\
%\nonumber=&\,\widetilde{F}^{ij}\left(\frac{\rho_{ij}}{\rho}-\frac{5}{2}\frac{\rho_{i}\rho_{j}}{\rho^{2}}\right)+\frac{1}{v}\sum_ku_{k}\widetilde{F}^{ij}u_{kij}+\phi'\widetilde{F}^{ij}u_{ij}\\
%\nonumber&\,+\left[\phi^{''}-\frac{3(\phi')^{2}}{4}\right]\widetilde{F}^{ij}u_{i}u_{j}+\frac{1}{v}\widetilde{F}^{ij}\left(\delta_{kl}-\frac{u_{k}u_{l}}{2v}\right)u_{ki}u_{lj}\\
\geq&\,\widetilde{F}^{ij}\left(\frac{\rho_{ij}}{\rho}-\frac{5}{2}\frac{\rho_{i}\rho_{j}}{\rho^{2}}\right)+\frac{1}{v}\sum_ku_{k}\widetilde{F}^{ij}u_{kij}+\phi'\widetilde{F}^{ij}u_{ij}\\
\nonumber&\,+\left[\phi^{''}-\frac{3(\phi')^{2}}{4}\right]\widetilde{F}^{ij}u_{i}u_{j}.
\end{align}
Recall
\begin{align*}
  (U^t)_{ij}=&\,\lambda(\Delta u)g_{ij}+u_{ij}+au_iu_j+b|\nabla u|^{2}g_{ij}+(B_g)_{ij},\\
  (U^t)_{ijk}=&\,\lambda(\Delta u)_{k}g_{ij}+u_{ijk}+a_{k}u_{i}u_{j}+au_{ik}u_{j}+au_{i}u_{jk} \\
  &\,+b_{k}|\nabla u|^{2}g_{ij}+2b\sum_pu_{p}u_{pk}g_{ij}+(B_g)_{ijk},
\end{align*}
then
\begin{align}\label{eq050602}
 \widetilde{ F}^{ij}u_{ij}%=&\left(\lambda F^{rs}g_{rs}\delta_{ij}+F^{ij}\right)u_{ij}\\
 %\nonumber=&\lambda F^{ij}g_{ij}\triangle u+F^{ij}u_{ij}\\
=F^{ij}(U^t)_{ij}-a(x)F^{ij}u_{i}u_{j}-b(x)|\nabla u|^{2}F^{ij}g_{ij}-F^{ij}(B_g)_{ij},
\end{align}
\begin{align}\label{eq050603}
  \widetilde{F}^{ij}u_{kij}
  %=&\,\widetilde{F}^{ij}u_{ijk}+\widetilde{F}^{ij}R_{pikj}u_{p}\\
 %=&\, \left(\lambda F^{rs}g_{rs}\delta_{ij}+F^{ij}\right)u_{ijk}+\widetilde{F}^{ij}R_{pikj}u_{p}\\
 %=&\,\lambda F^{ij}g_{ij}(\triangle u)_k+F^{ij}u_{ijk}+\widetilde{F}^{ij}R_{pikj}u_{p}\\
 % \nonumber=&\,F^{ij}[(U^t)_{ijk}-a_{k}u_{i}u_{j}-au_{ik}u_{j}-au_{i}u_{jk}-b_{k}|\nabla u|^{2}g_{ij}\\
  %&\,-2b\sum_pu_{p}u_{pk}g_{ij}-(B_g)_{ijk}]+\widetilde{F}^{ij}R_{pikj}u_{p}\\
  =&\,F^{ij}(U^t)_{ijk}-a_{k}F^{ij}u_{i}u_{j}-2aF^{ij}u_{i}u_{jk}-b_{k}|\nabla u|^{2}F^{ij}g_{ij}\\
  \nonumber&\,-2b\sum_pu_{p}u_{pk}F^{ij}g_{ij}-F^{ij}(B_g)_{ijk}+\widetilde{F}^{ij}R_{pikj}u_{p},
\end{align}
where $R_{pikj}$ is the Riemannnian curvature tensor of $(M,g)$.
Without loss of generality, we assume $\frac{1}{2}|\nabla u|^2\leq v\leq |\nabla u|^2$.
Substituting \eqref{eq050602} and \eqref{eq050603} into \eqref{eq4.11}, we have
\begin{align}\label{eq051801}
\nonumber0\geq&\,
v\widetilde{F}^{ij}\left(\rho_{ij}-\frac{5}{2}\frac{\rho_{i}\rho_{j}}{\rho}\right)+\rho v\left[\phi^{''}-\frac{3(\phi')^{2}}{4}\right]\widetilde{F}^{ij}u_{i}u_{j}\\
\nonumber&\,+\rho  u_kF^{ij}(U^t)_{ijk}+\phi'\rho vF^{ij}(U^t)_{ij}-\phi'\rho vF^{ij}(B_g)_{ij}-\rho u_kF^{ij}(B_g)_{ijk}\\
\nonumber&\,-\rho a_{k}F^{ij}u_{i}u_{j}u_k-\rho b_{k}u_k|\nabla u|^{2}F^{ij}g_{ij}+\rho u_ku_{p}\widetilde{F}^{ij}R_{pikj}\\
\nonumber&\, -2a\rho F^{ij}u_{i}u_ku_{jk}-2b\rho u_ku_{p}u_{pk}F^{ij}g_{ij}-\phi'\rho vaF^{ij}u_{i}u_{j}-\phi'\rho vb|\nabla u|^{2}F^{ij}g_{ij}\\
\geq&\,
\rho v\left[\phi^{''}-\frac{3(\phi')^{2}}{4}+a\phi'\right]\widetilde{F}^{ij}u_{i}u_{j}+(-\lambda a+b)\rho v \phi'F^{ij}g_{ij}|\nabla u|^{2}\\
\nonumber&\,+2avF^{ij}u_i\rho_j+2bvu_k\rho_k F^{ij}g_{ij}+v\widetilde{F}^{ij}\left(\rho_{ij}-\frac{5}{2}\frac{\rho_{i}\rho_{j}}{\rho}\right)\\
\nonumber&\,+\rho u_k F^{ij}(U^t)_{ijk}+\phi'\rho vF^{ij}(U^t)_{ij}-\rho a_{k}F^{ij}u_{i}u_{j}u_k-\rho b_{k}u_k|\nabla u|^{2}F^{ij}g_{ij}\\
\nonumber&\,-\phi'\rho vF^{ij}(B_g)_{ij}-\rho u_kF^{ij}(B_g)_{ijk}+\rho u_ku_{p}\widetilde{F}^{ij}R_{pikj}.
\end{align}
%From \eqref{eq4.1}
%\begin{align*}
%&-2a\rho F^{ij}u_{i}u_ku_{jk}-2b\rho u_ku_{p}u_{pk}F^{ij}g_{ij}-\phi'\rho vaF^{ij}u_{i}u_{j}-\phi'\rho vb|\nabla u|^{2}F^{ij}g_{ij}\\
%=&\,2a\rho v F^{ij}u_{i}\left(\frac{\rho_j}{\rho}+\phi'u_j\right)+2b\rho v F^{ij}g_{ij}u_k\left(\frac{\rho_k}{\rho}+\phi'u_k\right)\\
 %  &\,-\phi'\rho vaF^{ij}u_{i}u_{j}-\phi'\rho vb|\nabla u|^{2}F^{ij}g_{ij}\\
%=&\, 2avF^{ij}u_i\rho_j+2bvu_k\rho_k F^{ij}g_{ij}+a\rho v \phi'F^{ij}u_iu_j+b\rho v \phi'F^{ij}g_{ij}|\nabla u|^{2},
%\end{align*}
%then
%\begin{align}
%0\geq&\,
%\rho v\left[\phi^{''}-\frac{3(\phi')^{2}}{4}+a\phi'\right]\widetilde{F}^{ij}u_{i}u_{j}+(-\lambda a+b)\rho v \phi'F^{ij}g_{ij}|\nabla u|^{2}\\
%\nonumber&\,+2avF^{ij}u_i\rho_j+2bvu_k\rho_k F^{ij}g_{ij}+v\widetilde{F}^{ij}\left(\rho_{ij}-\frac{5}{2}\frac{\rho_{i}\rho_{j}}{\rho}\right)\\
%\nonumber&\,+\rho u_k F^{ij}(U^t)_{ijk}+\phi'\rho vF^{ij}(U^t)_{ij}-\rho a_{k}F^{ij}u_{i}u_{j}u_k-\rho b_{k}u_k|\nabla u|^{2}F^{ij}g_{ij}\\
%\nonumber&\,-\phi'\rho vF^{ij}(B_g)_{ij}-\rho u_kF^{ij}(B_g)_{ijk}+\rho u_ku_{p}\widetilde{F}^{ij}R_{pikj}.
%end{align}
By Lemma \ref{le051201}
\begin{align}\label{eq051802}
F^{ij}(U^t)_{ij}\leq 2e^{2u}\left[tf(x)+(1-t)\frac{\widetilde{S}_n(B_g)}{\widetilde{S}_{n-1}(B_g)}\right]\leq C.
\end{align}
%where $C$ is a positive constant depending on $\alpha, g, B, \|u\|_{C^0}, \|f\|_{C^0}$.
%where $C$ depends on $g, B$, $\|u\|_{C^0}$, $\|\alpha\|_{C^1}$, $\|f\|_{C^1}$.
Note that the following estimate holds
$$\left[\frac{\widetilde{S}_n}{\widetilde{S}_{n-1}}(B_g)\right]_k\geq -C|\nabla u|,$$
then
\begin{align}\label{eq051201}
F^{ij}(U^t)_{ijk}=&\,\left[e^{2u}\left(tf(x)+(1-t)\frac{\widetilde{S}_n}{\widetilde{S}_{n-1}}(B_g)\right)\right]_k\geq -C-Cv^{\frac{1}{2}}.
%=&\,2e^{2u}u_kf^t+te^{2u}f_k+(1-t)e^{2u}\left[\frac{\widetilde{S}_n(B_g)}{\widetilde{S}_{n-1}(B_g)}\right]_k\\
%\nonumber \geq&\, -C-Cv^{\frac{1}{2}},
\end{align}
Applying the idea of Guan-Wang \cite{GW03}, we assume
$|\nabla\rho|\leq C\rho^{1/2}$, $|\nabla^{2}\rho|\leq C$. Combining  \eqref{eq051801}-\eqref{eq051201}, we derive
\begin{align*}
0\geq&\,\rho v\left[\phi^{''}-\frac{3(\phi')^{2}}{4}+a\phi'\right]\widetilde{F}^{ij}u_{i}u_{j}+(-\lambda a+b)\rho v \phi'F^{ij}g_{ij}|\nabla u|^{2}\\
 &\,-C\rho v-C\rho v(-\phi')-C\rho v^{\frac{3}{2}}\sum_i F^{ii} -C\rho v^{\frac{3}{2}} F^{ij}g_{ij} -C\rho v(\lambda F^{rs}g_{rs}\delta_{ij}+F^{ij})\\
%\geq&\,\rho v\left[\phi^{''}-\frac{3(\phi')^{2}}{4}+a\phi'\right]\widetilde{F}^{ij}u_{i}u_{j}+(-\lambda a+b)\rho v \phi'F^{ij}g_{ij}|\nabla u|^{2}\\
%&\,-C\rho v-C\rho v(-\phi')-C v^{\frac{3}{2}}\sum_i F^{ii} \\
\geq&\,\lambda \rho v(-\phi')\left(\frac{N}{4 \varphi}-a\right)F^{ij}g_{ij}|\nabla u|^{2}+(\lambda a-b)\rho v(-\phi') F^{ij}g_{ij}|\nabla u|^{2}\\
&\,-C\rho v-C\rho v(-\phi')-C v^{\frac{3}{2}}\sum_i F^{ii}\\
\geq&\, \frac{1}{n}\rho v(-\phi')|\nabla u|^{2}\left[\lambda \left(\frac{N}{4 \varphi}-a\right)+\lambda a-b\right]-C\rho v-C\rho v(-\phi')-Cv^{\frac{3}{2}}.
\end{align*}
Now we suppose that $N$ is large enough such that $\frac{N\lambda}{8 \varphi}-b\geq 0$, then
$$0\geq  \frac{N}{8n\varphi}\lambda\rho (-\phi')v^2-CN\rho v-Cv^{\frac{3}{2}},$$
which implies $\rho(x_0)v(x_0)\leq C$. According to $Q(x)\leq Q(x_0)$,  gradient estimates have been derived.
\end{proof}

\subsection{Proof of $C^2$ eatimates}
In this subsection, we deduce  $C^2$ estimates for equation \eqref{eq050701}.

\begin{theorem}\label{thm051103}
Let $f(x)>0$ and $\alpha(x)\geq 0$ be smooth functions on $M$.
If $u\in C^{4}(B_{r})$ is a $(n-1)$-admissible solution of equation \eqref{eq050701} in $B_{r}\subset M$ with $\lambda>0$ and $B\in \Gamma_{n}$, then
\begin{equation*}
  \sup_{B_{r/2}}|\nabla^{2}u|\leq C,
\end{equation*}
where $C$ depends on $\lambda$, $g$, $\|\alpha\|_{C^{2}}$, $\|a\|_{C^{2}}$, $\|b\|_{C^{2}}$, $\|B\|_{C^{2}}$, $\|f\|_{C^2}$  and $\|u\|_{C^{1}}$.
\end{theorem}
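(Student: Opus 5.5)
Since $U^t\in\Gamma_{n-1}\subset\Gamma_1$, we have $\operatorname{tr}_g U^t>0$. Tracing the definition of $U^t$ gives
$$\operatorname{tr}_g U^t=(n\lambda+1)\Delta u+(a+nb)|\nabla u|^2+\operatorname{tr}_g B_g .$$
Once the $C^1$ bound of Theorem \ref{thm050701} is in hand, $\Delta u$ is bounded below. It therefore suffices to bound the largest eigenvalue of $U^t$ from above. Indeed, every eigenvalue $\eta_i$ of $U^t$ satisfies $\eta_i>-(n-1)\eta_{\max}$ when $\eta\in\Gamma_1$. Together with the bounds on $\nabla u$ and $\Delta u$, this controls $|\nabla^2u|$, because $u_{ij}=(U^t)_{ij}-\lambda\Delta u\, g_{ij}-\dots$.

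Following the scheme of Theorem \ref{thm050701}, we localize with the Guan--Wang cutoff $\rho$, which satisfies $|\nabla\rho|\le C\rho^{1/2}$ and $|\nabla^2\rho|\le C$. We consider
$$W=\log\eta_1+\frac{\beta}{2}|\nabla u|^2+\log\rho ,$$
where $\eta_1$ is the largest eigenvalue of $U^t$ (perturbed as usual if it is not smooth) and $\beta\gg1$ is to be chosen. At an interior maximum $x_0$ we diagonalize $U^t$ and apply $\widetilde F^{ij}\partial_{ij}$, where $\widetilde F^{ij}=\lambda(\sum F^{rr})\delta_{ij}+F^{ij}$.

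Differentiating the equation \eqref{eq050701} twice in the $e_1$ direction gives
$$F^{ii}(U^t)_{ii11}+F^{ij,rs}(U^t)_{ij1}(U^t)_{rs1}=(e^{2u}f^t)_{11}.$$
The contribution of $\alpha e^{2u}$ in $F$ is a lower-order $x$-dependence of the operator. It produces terms bounded by $C(1+\eta_1)\sum F^{ii}$ and $C\sum_i F^{ii}|(U^t)_{ii1}|$, which are absorbed by Cauchy--Schwarz. The right-hand side is $\ge -C(1+\eta_1)$ since $|u_{11k}|$ only appears linearly. Commuting derivatives turns $\widetilde F^{ij}(U^t)_{11ij}$ into $\widetilde F^{ij}(U^t)_{ij11}$ plus curvature terms of size $C\eta_1\sum F^{ii}$. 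The quadratic gradient terms $a\,du\otimes du+b|\nabla u|^2g$ contribute expressions such as $2a\,u_{1i}u_{1j}F^{ij}$ and $2b\sum_p u_{p1}^2\sum F^{ii}$. These are of order $\eta_1^2\sum F^{ii}$ and are dangerous when $a,b<0$. They are to be dominated by the good term coming from $\frac{\beta}{2}|\nabla u|^2$, namely $\beta\widetilde F^{ij}u_{ki}u_{kj}\ge \beta\lambda\sum F^{rr}\,|\nabla^2u|^2$. This is the point where the $\lambda(\Delta u)g$ structure (the $\lambda\sum F^{rr}$ part of $\widetilde F$) is essential. By Lemma \ref{le051201}, $\sum F^{ii}\ge 1/n$, so this good term is at least $\frac{\beta\lambda}{n}|\nabla^2 u|^2\ge c\beta\eta_1^2$. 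The first-order term $\beta u_k\widetilde F^{ij}u_{kij}$ is handled as in \eqref{eq050603}: it becomes $\beta u_kF^{ij}(U^t)_{ijk}+\dots$, which is bounded by $C\beta(1+\eta_1)$ using the once-differentiated equation and $\sum F^{ii}\le2$.

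The main obstacle is the third-order term $-\widetilde F^{ii}(U^t)_{11i}^2/\eta_1^2$ that arises from $\log\eta_1$. We plan to handle it through concavity. By Lemma \ref{le050901}, $-F^{ij,rs}(U^t)_{ij1}(U^t)_{rs1}\ge0$, and the standard refinement gives $-\sum_{i\ne j}F^{ij,ji}\ge 2\sum_{i\ge2}\frac{F^{ii}-F^{11}}{\eta_1-\eta_i}(U^t)_{1i1}^2$. Differentiating $U^t$ shows that $(U^t)_{11i}$ and $(U^t)_{1i1}$ differ only by lower-order terms of size $C(1+\eta_1)$. Using the critical-point condition $\frac{(U^t)_{11i}}{\eta_1}=-\beta u_ku_{ki}-\frac{\rho_i}{\rho}$, the $i=1$ term and the terms with $\lambda\sum F^{rr}$ are bounded by $C\beta^2\sum F^{ii}+C\sum F^{ii}/\rho$. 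Since $F^{ii}\ge F^{11}$ for $\eta_i\le\eta_1$, the terms with $i\ge2$ are absorbed by the concavity term once $\eta_1\gg1$.

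Collecting everything yields, at $x_0$,
$$0\ge c\beta\rho\eta_1^2-C\beta^2\eta_1-C\beta\eta_1-C .$$
Here $\beta$ is first fixed large enough to beat the $a,b$ and curvature terms, and the inequality then gives $\rho\eta_1\le C$. The hardest bookkeeping is the non-homogeneous $\alpha e^{2u}$ weight inside $F$. We will treat it as an $x$-dependence and rely on Lemma \ref{le051201} throughout.
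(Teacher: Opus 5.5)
\textbf{There is a genuine gap.} It sits at the step you yourself flag as the main obstacle, the term $-\widetilde F^{ii}(U^t)_{11i}^2/\eta_1^2$. Two of the claims you rely on there are false.

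First, the $\lambda(\Delta u)g$ part of $U^t$ destroys any Codazzi-type symmetry. In an orthonormal frame, for $i\neq 1$,
\[
(U^t)_{11i}-(U^t)_{1i1}=\lambda(\Delta u)_i+O(1+\eta_1).
\]
This is a genuine third-order quantity, and nothing in your setup controls it. So the concavity term $\sum_{i\ge2}\frac{F^{ii}-F^{11}}{\eta_1-\eta_i}(U^t)_{1i1}^2$ gives no control of $(U^t)_{11i}^2$.

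For the same reason, $\widetilde F^{ij}\nabla_{ij}(U^t)_{11}$ is not $\widetilde F^{ij}(U^t)_{ij11}$ plus curvature terms. Moreover, the twice-differentiated equation controls $F^{ij}(U^t)_{ij11}$, not $\widetilde F^{ij}(U^t)_{ij11}$. What you actually get is $F^{ij}(U^t)_{ij11}+\lambda\sum_kF^{ij}(U^t)_{ijkk}$, plus terms coming from $a$, $b$, $B_g$ and curvature. The extra sum brings in third-order terms such as $-2b\lambda\sum_iF^{ii}\,u_p(\Delta u)_p$. These become harmless only after you observe that they combine with $-2b\sum_iF^{ii}\,u_pu_{p11}$ into $-2b\sum_iF^{ii}\,u_p(U^t)_{11p}+O(\eta_1)$.

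Second, the critical-point substitution $\frac{(U^t)_{11i}}{\eta_1}=-\beta u_ku_{ki}-\frac{\rho_i}{\rho}$ does not give $C\beta^2\sum F^{ii}+C\sum F^{ii}/\rho$. The reason is that $u_{ki}$ is of size $\eta_1$, not $O(1)$. What it gives is
\[
\widetilde F^{ii}\frac{(U^t)_{11i}^2}{\eta_1^2}\le 2\beta^2|\nabla u|^2\,\widetilde F^{ij}u_{ki}u_{kj}+\frac{C}{\rho}.
\]
This is of the same order as your good term $\beta\widetilde F^{ij}u_{ki}u_{kj}$, but with $\beta^2$ in place of $\beta$. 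With $\beta\gg1$ it wins, and your final inequality is missing a term of the form $-C\beta^2\rho\eta_1^2$.

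\textbf{How the scheme can be rescued.} You must choose $\beta$ the other way. After division by $\eta_1$, the $a$, $b$ and curvature contributions are only $O(\eta_1)$, so a large $\beta$ is never needed. Take $\beta\le 1/(4+4\sup|\nabla u|^2)$. Then the bound above, applied to every $i$, costs at most half of the good term. The estimate $\widetilde F^{ij}u_{ki}u_{kj}\ge\lambda\sum_rF^{rr}|\nabla^2u|^2\ge\frac{\lambda}{n}|\nabla^2u|^2$ (Lemma \ref{le051201}) then gives $0\ge c\beta\eta_1^2-C(1+\eta_1)-C\rho^{-1}$, with no need for the concavity refinement.

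\textbf{What the paper does.} It sidesteps all of this by avoiding the logarithm. Since $\Gamma_{n-1}\subset\Gamma_2$ gives $|(U^t)_{ij}|\le C\operatorname{tr}U^t$, it suffices to bound $\Delta u$, which the paper does via $P=\rho(\Delta u+N_1|\nabla u|^2)$. There is then no squared third-order term. The only third-order quantities are $(\Delta u)_i$, which $P_i=0$ controls directly. Concavity is used only in the crude form $-F^{ij,rs}(U^t)_{ijk}(U^t)_{rsk}\ge0$. Finally, the good term $2N_1\rho\lambda\sum_rF^{rr}|\nabla^2u|^2\ge\frac{2N_1\lambda}{n}\rho\sum_rF^{rr}(\Delta u)^2$ beats the $a$, $b$ terms once $N_1$ is large.
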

\begin{proof}
From $g^{-1}\cdot U^t\in\Gamma_{2}\subset\Gamma_{n-1} $, we get $ |(U^t)_{ij}|\leq Ctr(U^t)$, thus $|u_{ij}|\leq C(\triangle u+1)$. Therefore, it suffices to establish an upper bound for $\Delta u$. Denote
\begin{equation*}
  P(x)=\rho(x)(\triangle u+N_1|\nabla u|^2),
\end{equation*}
where $\rho(x)\in C_{0}^{\infty}(B_{r})$ is the cutoff function defined in the proof of Theorem \ref{thm050701} and $N_1>1$ is a large constant. Assume $P$ reaches its maximum value at an interior point $x_1\in B_{r}$ and $U^t(u(x_1))$ is diagonal. From now on, all calculations are carried out at the point $x_1$.
\begin{align}
  0=P_{i}=&\,\rho_{i}(\triangle u+N_1|\nabla u|^2)+\rho[(\triangle u)_{i}+2N_1u_{k}u_{ki}],\label{eq07181}\\
  0\geq P_{ij}=&\,\rho_{ij}(\triangle u+N_1|\nabla u|^2)+\rho_{i}[(\triangle u)_{j}+2N_1u_{k}u_{kj}] \label{eq07182}\\
  \nonumber&+\rho_{j}[(\triangle u)_{i}+2N_1u_{k}u_{ki}]+\rho[(\triangle u)_{ij}+2N_1u_{ki}u_{kj}+2N_1u_{k}u_{kij}]\\
  \nonumber\geq&\,\rho(\triangle u)_{ij}+2N_1\rho u_{ki}u_{kj}+2N_1\rho u_{k}u_{kij}-CN_1(\triangle u+1).
\end{align}
Differentiate equation \eqref{eq050701}
\begin{align}
\nonumber F^{ij}(U^t)_{ijkk}=&\,(e^{2u}f^t)_{kk}-F^{ij,rs}(U^t)_{ijk}(U^t)_{rsk}\\
\nonumber\geq&\,4e^{2u}u_k^2f^t+2e^{2u}u_{kk}f^t+4e^{2u}u_k(f^t)_k+e^{2u}(f^t)_{kk}\\
\geq&\,-C|\nabla u|^2-C\triangle u+te^{2u}f_{kk}+(1-t)e^{2u}\left[\frac{\widetilde{S}_n(B_g)}{\widetilde{S}_{n-1}(B_g)}\right]_{kk}\\
\nonumber \geq&\, -C-C\triangle u,
\end{align}
where  we use  $\left[\frac{\widetilde{S}_n(B_g)}{\widetilde{S}_{n-1}(B_g)}\right]_{kk}\geq-C\triangle u$ in the last inequality.
%where $C$ depends on $\|u\|_{C^1}$, $\|\alpha\|_{C^2}$, $\|f\|_{C^2}$, $\|B\|_{C^2}$.
Recall
\begin{align*}
  (U^t)_{ijkk}=&\lambda(\triangle u)_{kk}g_{ij}+u_{ijkk}+a_{kk}u_{i}u_{j}+2a_{k}u_{ik}u_{j}+2a_{k}u_{i}u_{jk}\\
&+au_{ikk}u_{j}+2au_{ik}u_{jk}+au_{i}u_{jkk}+b_{kk}|\nabla u|^{2}g_{ij}\\
&+4b_{k}u_{p}u_{pk}g_{ij}+2bu_{pk}u_{pk}g_{ij}+2bu_{p}u_{pkk}g_{ij}+(B_g)_{ijkk},
\end{align*}
by the Ricci identity, we show
\begin{align}\label{eq07183}
   \widetilde{F}^{ij}u_{kkij}\geq &\, \widetilde{F}^{ij}u_{ijkk}-CF^{ij}(|\nabla^2u|+|\nabla u|)\\
   \nonumber=&\,F^{ij}(U^t)_{ijkk}-CF^{ij}(|\nabla^2u|+|\nabla u|)\\
  \nonumber &\,-F^{ij}[a_{kk}u_{i}u_{j}+4a_{k}u_{ik}u_{j}+2au_{ikk}u_{j}+2au_{ik}u_{jk}+b_{kk}|\nabla u|^{2}g_{ij}\\
  \nonumber &\,+4b_{k}u_{p}u_{pk}g_{ij}+2bu_{pk}u_{pk}g_{ij}+2bu_{p}u_{pkk}g_{ij}+(B_g)_{ijkk}]\\
  \nonumber\geq&\,-F^{ij}(2au_{ik}u_{jk}+2bu_{pk}u_{pk}g_{ij})-C\rho^{-\frac{1}{2}}F^{ij}g_{ij}(\triangle u+1)-C\triangle u.
\end{align}
Accord to \eqref{eq050603} and \eqref{eq051201}
\begin{align}
  \widetilde{F}^{ij}u_{kij}=&\,(e^{2u}f^t)_k-a_{k}F^{ij}u_{i}u_{j}-2aF^{ij}u_{i}u_{jk}-b_{k}|\nabla u|^{2}F^{ij}g_{ij}\\
  \nonumber&\,-2bu_{p}u_{pk}F^{ij}g_{ij}-F^{ij}(B_g)_{ijk}+\widetilde{F}^{ij}R_{pikj}u_{p}\\
  \nonumber\geq &\,-C(\triangle u+1)-CF^{ij}g_{ij}(\triangle u+1).
\end{align}
Then
\begin{equation*}
  \begin{aligned}
  0\geq&\,\widetilde{F}^{ij}P_{ij}\\
  \geq&\,-\rho F^{ij}(2au_{ik}u_{jk}+2bu_{pk}u_{pk}g_{ij})-C\rho^{\frac{1}{2}}F^{ij}g_{ij}(\triangle u+1)-C\rho \triangle u\\
  &+2N_1\rho \widetilde{F}^{ij}u_{ki}u_{kj}+2N_1\rho \widetilde{F}^{ij}u_{k}u_{kij}-CN_1\widetilde{F}^{ij}(\triangle u+1)\\
  \geq&\,2N_1\rho (\lambda F^{rs}g_{rs}\delta_{ij})u_{ki}u_{kj}-\rho F^{ij}(2au_{ik}u_{jk}+2bu_{pk}u_{pk}g_{ij})\\
  &-CN_1F^{ij}g_{ij}(\triangle u+1)-CN_1\triangle u-CN_1\widetilde{F}^{ij}(\triangle u+1)\\
  \geq&\,\frac{2N_1}{n}\rho \lambda F^{ij}g_{ij}(\triangle u)^2-\rho F^{ij}g_{ij}[2|a|(\triangle u)^2+2|b|(\triangle u)^2]\\
  &-CN_1F^{ij}g_{ij}(\triangle u+1)-CN_1\triangle u\\
  \geq&\,F^{ij}g_{ij}\left[\frac{2}{n}N_1\rho \lambda(\triangle u)^2-\rho(2|a|+2|b|)(\triangle u)^2-CN_1(\triangle u+1)\right]-CN_1\triangle u,
  \end{aligned}
\end{equation*}
where $C$ is a positive constant depending on $\lambda$, $g$, $\|\alpha\|_{C^{2}}$, $\|a\|_{C^{2}}$, $\|b\|_{C^{2}}$, $\|B\|_{C^{2}}$, $\|f\|_{C^2}$ and $\|u\|_{C^{1}}$.
If $$\frac{2}{n}N_1\rho \lambda(\triangle u)^2-\rho(2|a|+2|b|)(\triangle u)^2-CN_1(\triangle u+1)\leq 0,$$
then $\triangle u\leq C$ by choosing $N_1$ sufficiently large. Otherwise
\begin{align*}
  0\geq&\, \frac{1}{n}\left[\frac{2}{n}N_1\rho \lambda(\triangle u)^2-\rho(2|a|+2|b|)(\triangle u)^2-CN_1(\triangle u+1)\right]-CN_1\triangle u\\
  \geq&\,\frac{N_1}{n^2}\rho \lambda(\triangle u)^2-CN_1\triangle u.
\end{align*}
Therefore, we complete the proof.
\end{proof}

\subsection{Proof of Theorem \ref{thm051104}}
In this section, we utilize the degree theory for fully nonlinear elliptic equations developed in \cite{Li89}. Given that the proof is analogous to the method established in \cite{An, Jin, LS05}, we present a concise sketch of the continuity path and the verification of the non-zero degree, which ensures the solvability of our target equation.
%In this section, we apply the degree theory for nonlinear elliptic equations developed in \cite{Li89} to establish Theorem \ref{thm051104}. Since the argument is analogous to those in \cite{An, Jin, LS05}, we provide only a brief sketch here.

The a priori estimates derived in Theorems \ref{thm051101}, \ref{thm050701}, \ref{thm051103} ensure that equation \eqref{eq050701} is uniformly elliptic. In view of the Evans-Krylov theory \cite{Eva82, Kry83} and standard Schauder estimates, it follows that
\begin{eqnarray}\label{C2+}
|u|_{C^{4,\delta}}\leq C, \quad 0< \delta<1.
\end{eqnarray}
Next we consider a family of the mapping for $t \in [0,1]$
$$\widetilde{G}(.; t): C_{0}^{4,\delta}(M)\rightarrow
C^{2,\delta}(M),$$
which is defined by
$$\widetilde{G}(u;t)=F(U^t)-e^{2u}f^t(x).$$
And $C_{0}^{4,\delta}(M)=\{u \in C^{4,\delta}(M), \, \lambda(g^{-1}\cdot U^t) \in \Gamma_{n-1}\}$ is an open set of $C^{4,\delta}(M)$.
Let
$$\mathcal{O}_R=\{\rho \in C_{0}^{4,\delta}(M): \|u\|_{C^{4,\delta}}<R\},$$
which is also an open subset of $C^{4,\delta}(M)$.
Moreover, for a sufficiently large $R$, the a priori estimates established in \eqref{C2+} ensure that  $\widetilde{G}(u; t)=0$ possesses no solution on $\partial \mathcal{O}_R$. Consequently, the degree $\deg(\widetilde{G}(.; t), \mathcal{O}_R, 0)$ is well-defined for $0\leq t\leq 1$.

When $t=0$, Lemma \ref{le050901} shows that $u=0$ is the unique solution to equation \eqref{eq050701}.  Let
$u(x,s)$ be the variation of $u = 0$ such that at $u_s'=\phi$ at $s =0$. A straightforward calculation shows that
$$\delta_{\phi}\widetilde{G}(0; 0)= a^{ij}\phi_{ij}+b^i \phi_i-2\frac{\sigma_n(g)+\alpha \sigma_{n-1}(g)}{\sigma_{n-1}(g)
+\alpha \sigma_{n-2}(g)}\phi,$$
where $a^{ij}$ is a positive definite matrix and  $\delta_{\phi}\widetilde{G}(0; 0)$ is the linearized operator of $\widetilde{G}$ at $u=0$. Hence, $\delta_{\phi}\widetilde{G}(0; 0)$ is an in invertible operator. Therefore
\begin{eqnarray*}
\deg(\widetilde{G}(.; 1), \mathcal{O}_R; 0)=\deg(\widetilde{G}(.; 0), \mathcal{O}_R, 0)=\pm
1.
\end{eqnarray*}
So we obtain a solution at $t=1$. This completes the proof of Theorem \ref{thm051104}.

\vskip 0.1cm {\bf Acknowledgments}
\quad Authors would like to express sincere gratitude to Prof. Yan He for giving us many professional and helpful suggestions in our work.
\vskip 0.1cm
{\bf Data Availability Statement}\quad This paper doesn't incorporate any data.
\vskip 0.1cm
{\bf Conflict of Interest}\quad The authors declare no conflict of interest.

%%%%%%%%%%%%%%%%%%%%%%%%%%%%%%%%%%%%%%%%%%%%%%%%%%%%%%%%%%%%%%%%%%%%%%%%%%%%%%%%%%%%%%%%%%%%%%%%%%%%%%%%%%%%%%%%%%%%%%%%%%%%%%%%%%%%%%%%%%%%%%%%%%%%%%%%%%%%%%%%%%%%

\end{document}